\let\pa\partial  
\let\na\nabla  
\let\eps\varepsilon  
\newcommand{\N}{{\mathbb N}}  
\newcommand{\R}{{\mathbb R}} 
\newcommand{\diver}{\operatorname{div}}
\newtheorem{theorem}{Theorem}   
\newtheorem{lemma}[theorem]{Lemma}   
\newtheorem{proposition}[theorem]{Proposition}   
\newtheorem{remark}[theorem]{Remark}
\begin{document}  

\title[Entropy-dissipating semi-discrete Runge-Kutta schemes]{
Entropy-dissipating semi-discrete Runge-Kutta schemes for nonlinear
diffusion equations}

\author[A. J\"{u}ngel]{Ansgar J\"{u}ngel}
\address{Institute for Analysis and Scientific Computing, Vienna University of  
Technology, Wiedner Hauptstra\ss e 8--10, 1040 Wien, Austria}
\email{juengel@tuwien.ac.at}

\author[S. Schuchnigg]{Stefan Schuchnigg}
\address{Institute for Analysis and Scientific Computing, Vienna University of  
Technology, Wiedner Hauptstra\ss e 8--10, 1040 Wien, Austria}
\email{stefan.schuchnigg@tuwien.ac.at}

\date{\today}

\thanks{The authors acknowledge partial support from   
the Austrian Science Fund (FWF), grants P24304, P27352, and W1245}  

\begin{abstract}
Semi-discrete Runge-Kutta schemes for nonlinear diffusion equations of
parabolic type are analyzed. Conditions are determined under which the
schemes dissipate the discrete entropy locally. The dissipation property
is a consequence of the concavity of the difference of the entropies 
at two consecutive time steps. The concavity property 
is shown to be related to the Bakry-Emery approach
and the geodesic convexity of the entropy. The abstract conditions
are verified for quasilinear parabolic equations (including the porous-medium
equation), a linear diffusion system, and the fourth-order quantum diffusion 
equation. Numerical experiments for
various Runge-Kutta finite-difference discretizations of the one-dimensional 
porous-medium equation show that the entropy-dissipation property is in fact global.
\end{abstract}

\keywords{Entropy-dissipative numerical schemes, Runge-Kutta schemes, 
entropy method, geodesic convexity, porous-medium equation, 
Derrida-Lebowitz-Speer-Spohn equation.} 

\subjclass[2010]{65J08, 65L06, 65M12, 65M20}  

\maketitle


\section{Introduction}\label{sec.intro}

Evolution equations often contain some structural information reflecting 
inherent physical properties such as positivity of solutions, conservation laws, 
and entropy dissipation. Numerical schemes should be designed
in such a way that these structural features are preserved on the discrete level 
in order to obtain accurate and stable algorithms.
In the last decades, concepts of structure-preserving schemes,
geometric integration, and compatible discretization have been
developed \cite{CMO11}, but much less is known about the preservation
of entropy dissipation and large-time asymptotics. 

Entropy-stable schemes were derived by Tadmor
already in the 1980s \cite{Tad87} in the context of conservation laws, thus
without (physical) diffusion. Later,
entropy-dissipative schemes were developed for (finite-volume)
discretizations of diffusion equations in \cite{Bes12,Fil08,GlGa09}.
In \cite{CaGu15}, a finite-volume scheme which preserves the gradient-flow 
structure and hence the entropy structure is proposed.
All these schemes are based on the implicit Euler method and are
of first order (in time) only.
Higher-order time schemes with entropy-dissipating properties are investigated 
in very few papers. A second-order predictor-corrector approximation
was suggested in \cite{LiYu14}, while higher-order semi-implicit Runge-Kutta (DIRK)
methods, together with a spatial fourth-order central finite-difference
discretization, were investigated in \cite{BFR15}. In \cite{BEJ14,JuMi15},
multistep time approximations were employed, but they can be 
at most of second order and they dissipate only one entropy and not all
functionals dissipated by the continuous equation.
In this paper, we remove these restrictions by investigating 
time-discrete Runge-Kutta schemes of order $p\ge 1$ for general 
diffusion equations.

We stress the fact that we are interested in the analysis of entropy-dissipating
schemes by ``translating'' properties for the continuous equation to the 
semi-discrete level, i.e., we study the stability of the schemes. 
However, we will not investigate convergence, stiffness, or computational 
issues here (see e.g.\ \cite{BFR15}).

More precisely, we consider time discretizations of the abstract Cauchy problem
\begin{equation}\label{1.eq}
  \pa_t u(t) + A[u(t)] = 0, \quad t>0, \quad u(0)=u^0,
\end{equation}
where $A:D(A)\to X'$ is a (differential) operator defined on $D(A)\subset X$
and $X$ is a  Banach space with dual $X'$. In this paper, we restrict ourselves
to diffusion operators $A[u]$ defined on some Sobolev space with solutions
$u:\Omega\times(0,\infty)\to\R^n$, which may be vector-valued.
A typical example is $A[u]=\diver(a(u)\na u)$ defined on $X=L^2(\Omega)$
with domain $D(A)=H^2(\Omega)$, where $a:\R\to\R$ is a
smooth function  (see section \ref{sec.de}). 
Equation \eqref{1.eq} often possesses a Lyapunov functional
$H[u]=\int_\Omega h(u)dx$ (here called {\em entropy}), where $h:\R^n\to\R$, 
such that
$$
  \frac{dH}{dt}[u] = \int_\Omega h'(u)\pa_t u dx
	= -\int_\Omega h'(u)A[u] dx \le 0,
$$
at least when the {\em entropy production} $\int_\Omega h'(u)A[u] dx$ is nonnegative,
Here, $h'$ is the derivative of $h$ and 
$h'(u)A[u]$ is interpreted as the inner product of $h'(u)$ and $A[u]$
in $\R^n$. Furthermore,
if $h$ is convex, the convex Sobolev inequality $\int_\Omega h'(u)A[u]dx
\ge \kappa H[u]$ for some $\kappa>0$ may hold \cite{CJS15}, which implies 
that $dH/dt\le-\kappa H$ and hence
exponential convergence of $H[u]$ to zero with rate $\kappa$. The aim is
to design a higher-order time-discrete scheme which preserves this
entropy-dissipation property.

To this end, we propose the following semi-discrete Runge-Kutta
approximation of \eqref{1.eq}: Given $u^{k-1}\in X$, define
\begin{equation}\label{1.rk}
  u^{k} = u^{k-1} + \tau\sum_{i=1}^s b_iK_i, \quad
	K_i = -A\bigg[u^{k-1} + \tau\sum_{j=1}^s a_{ij}K_j\bigg], \quad i=1,\ldots,s,
\end{equation}
where $t^{k}$ are the time steps, 
$\tau=t^{k}-t^{k-1}>0$ is the uniform time step size, $u^k$ approximates $u(t^k)$,
and $s\ge 1$ denotes the number of Runge-Kutta stages. Since the Cauchy problem
is autonomous, the knots $c_1,\ldots,c_s$ are not needed here.
In concrete examples (see below), $u^k$ are functions from $\Omega$ to $\R^n$.
If $a_{ij}=0$ for $j\ge i$, the Runge-Kutta scheme
is explicit, otherwise it is implicit and a nonlinear system
of size $s$ has to be solved to compute $K_i$. 
We assume that scheme \eqref{1.rk} is solvable for $u^k:\Omega\to\R^n$.
 
Given $h:\R^n\to\R$, we wish to determine conditions 
under which the functional
\begin{equation}\label{1.H}
  H[u^k] = \int_\Omega h(u^k(x))dx
\end{equation}
is dissipated by the numerical scheme \eqref{1.rk},
\begin{equation}\label{1.ed}
  H[u^k] + \tau\int_\Omega A[u^k]h'(u^k)dx \le H[u^{k-1}], \quad k\in\N.
\end{equation}
In many examples (see below), $\int_\Omega A[u^k]h'(u^k)dx\ge 0$ 
and thus, the function $k\mapsto H[u^k]$ is decreasing. 
Such a property is the first step in proving
the preservation of the large-time asymptotics of the numerical scheme
(see Remark \ref{rem.exp}). 

Our main results, stated on an informal level, are as follows:
\begin{enumerate}[(i)]
\item We determine an abstract condition under which the discrete entropy-dissipation 
inequality \eqref{1.ed} holds for sufficiently small $\tau^k>0$.
This condition is made explicit for special
choices of $A$ and $h$, yielding entropy-dissipative implicit or explicit 
Runge-Kutta schemes of any order.
\item Numerical experiments for the porous-medium equation indicate that
$\tau^k$ may be chosen independent of the time step $k$, thus yielding
discrete entropy dissipation for all discrete times.
\item We show that for Runge-Kutta schemes of order $p\ge 2$,
the abstract condition in (i) is exactly the criterion of Liero and Mielke
\cite{LiMi13} to conclude geodesic 0-convexity of the entropy. In particular,
it is related to the Bakry-Emery condition \cite{BaEm85}.
\end{enumerate}

Let us describe the main results in more detail. 
We recall that the Runge-Kutta scheme \eqref{1.rk} is consistent if 
$\sum_{j=1}^s a_{ij}=c_i$ and $\sum_{i=1}^s b_i=1$. Furthermore, if 
$\sum_{i=1}^s b_ic_i=\frac12$, it is at least of order two \cite[Chap.~II]{HNW93}.
We introduce the number 
\begin{equation}\label{1.CRK}
  C_{\rm RK} = 2\sum_{i=1}^sb_i(1-c_i),
\end{equation}
which takes only three values:
\begin{align*}
  C_{\rm RK} &= 0\quad\mbox{for the implicit Euler scheme}, \\
	C_{\rm RK} &= 1\quad\mbox{for any Runge-Kutta scheme of order }p\ge 2, \\
	C_{\rm RK} &= 2\quad\mbox{for the explicit Euler scheme}.
\end{align*}

The {\em first main result} is an abstract entropy-dissipation property of scheme
\eqref{1.rk} for entropies of type \eqref{1.H}.

\begin{theorem}[Entropy-dissipation structure I]\label{thm.ed}
Let $h\in C^2(\R^n)$, let $A:D(A)\to X'$ be Fr\'echet differentiable
with Fr\'echet derivative $DA[u]:X\to X'$ at $u\in D(A)$, and let
$(u^k)$ be the Runge-Kutta solution to \eqref{1.rk}. Suppose that
\begin{equation}\label{1.I0}
  I_0^k :=
	\int_\Omega \big(C_{\rm RK}h'(u^k)DA[u^k](A[u^k])+h''(u^k)(A[u^k])^2\big)dx > 0.
\end{equation}
Then there exists $\tau^k>0$ such that for all $0<\tau\le\tau^k$,
\begin{equation}\label{1.edi}
  H[u^k] + \tau\int_\Omega A[u^k]h'(u^k)dx \le H[u^{k-1}].
\end{equation}
\end{theorem}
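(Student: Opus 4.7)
The plan is a Taylor expansion in $\tau$ of
\[
\Delta(\tau) := H[u^k] - H[u^{k-1}] + \tau\int_\Omega A[u^k]h'(u^k)\,dx,
\]
aiming to show that $\Delta(\tau) = -\tfrac{\tau^2}{2}I_0^{k-1} + o(\tau^2)$ as $\tau\to 0^+$, where $I_0^{k-1}$ denotes the quantity \eqref{1.I0} with $u^k$ replaced by $u^{k-1}$. Together with the continuity $I_0^k\to I_0^{k-1}$ as $\tau\to 0^+$, the hypothesis $I_0^k > 0$ then forces $\Delta(\tau)\le 0$ for all $0 < \tau \le \tau^k$, which is precisely \eqref{1.edi}.

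First, I expand the Runge-Kutta stages. At $\tau=0$ one has $K_i(0) = -A[u^{k-1}]$; differentiating the defining relation $K_i(\tau) = -A[u^{k-1}+\tau\sum_j a_{ij}K_j(\tau)]$ at $\tau=0$ and using the consistency identity $\sum_j a_{ij} = c_i$ yields $K_i'(0) = c_i\, DA[u^{k-1}](A[u^{k-1}])$. Combining with $\sum_i b_i = 1$ gives
\[
u^k - u^{k-1} = -\tau A[u^{k-1}] + \tau^2 \gamma\, DA[u^{k-1}](A[u^{k-1}]) + o(\tau^2),
\]
where $\gamma := \sum_{i=1}^s b_i c_i$. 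The crucial algebraic identity is $\gamma - 1 = -C_{\rm RK}/2$, which follows directly from \eqref{1.CRK} and $\sum_i b_i = 1$.

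Next, using $h\in C^2(\R^n)$ I Taylor-expand $H[u^k]-H[u^{k-1}]$ to second order in $u^k - u^{k-1}$, and using the Fr\'echet differentiability of $A$ together with $h'\in C^1$ I expand $\tau\int_\Omega A[u^k]h'(u^k)\,dx$ to first order. Substituting the stage expansion above, the linear-in-$\tau$ contributions $\mp\tau\int h'(u^{k-1})A[u^{k-1}]\,dx$ cancel, and the surviving $\tau^2$-terms combine to
\[
\tau^2(\gamma-1)\int_\Omega h'(u^{k-1})DA[u^{k-1}](A[u^{k-1}])\,dx - \tfrac{\tau^2}{2}\int_\Omega h''(u^{k-1})(A[u^{k-1}])^2\,dx,
\]
which by $\gamma-1 = -C_{\rm RK}/2$ equals $-\tfrac{\tau^2}{2}I_0^{k-1}$. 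Continuity of $A$, $DA$, $h'$, $h''$ and $u^k\to u^{k-1}$ as $\tau\to 0^+$ then transfer the strict positivity from $I_0^k$ to $I_0^{k-1}$ for $\tau$ small, so the $o(\tau^2)$ remainder is dominated by $\tfrac{\tau^2}{2}I_0^{k-1}$ and $\Delta(\tau)\le 0$ follows.

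The main obstacle I foresee is rigorously justifying the expansions under the stated minimal regularity. For implicit schemes, one must first establish $C^1$-smoothness of $\tau\mapsto(K_1(\tau),\ldots,K_s(\tau))$ near $\tau=0$; this follows from the implicit function theorem applied to $F(\tau,K) := K + A[u^{k-1}+\tau\sum a_{ij}K_j]$ on $X^s$, whose partial Fr\'echet derivative in $K$ at $\tau=0$ is the identity and hence invertible. The $o(\tau^2)$ error control in the Taylor expansions then reduces to uniform continuity of $h''$ (and of $DA$) on a bounded neighborhood of $u^{k-1}$, which is available since $h\in C^2$ and $A$ is assumed Fr\'echet differentiable.
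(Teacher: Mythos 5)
Your expansion is carried out in the forward direction: you fix $u^{k-1}$, regard $u^k=u^k(\tau)$ as the Runge--Kutta output, and Taylor-expand in $\tau$. The stage computation ($K_i'(0)=c_i\,DA[u^{k-1}](A[u^{k-1}])$, $\gamma-1=-C_{\rm RK}/2$) and the cancellation of the linear terms are correct, and they do yield
$\Delta(\tau)=-\tfrac{\tau^2}{2}I_0^{k-1}+o(\tau^2)$ with the leading coefficient evaluated at $u^{k-1}$. The paper does the opposite: it fixes $u:=u^k$, constructs $u^{k-1}=v(\tau)$ by \emph{backward} solvability (Proposition \ref{prop.back}), and expands $G(\tau)=H[u]-H[v(\tau)]$ with the exact Lagrange remainder $\tfrac{\tau^2}{2}G''(\xi)$, where $G''(0)=-I_0^k$ is evaluated at the \emph{same} iterate $u^k$ that appears in the hypothesis \eqref{1.I0} and in the production term of \eqref{1.edi}. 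Your route has the attraction of avoiding backward solvability altogether, but it proves a theorem whose natural hypothesis is $I_0^{k-1}>0$, not $I_0^k>0$.

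The genuine gap is the step where you ``transfer the strict positivity from $I_0^k$ to $I_0^{k-1}$'' by continuity. Continuity and $u^k\to u^{k-1}$ give only $I_0^{k-1}=\lim_{\tau\to0^+}I_0^k\ge 0$; strict inequalities do not survive the limit. If $I_0^{k-1}=0$, your identity reads $\Delta(\tau)=o(\tau^2)$ and the sign of $\Delta(\tau)$ is decided by the uncontrolled remainder, so the conclusion \eqref{1.edi} cannot be drawn. (Rewriting the leading term as $-\tfrac{\tau^2}{2}I_0^k+o(\tau^2)$ does not help, since $I_0^k$ may itself tend to $0$ at an unknown rate as $\tau\to0^+$.) The paper avoids this mismatch entirely because, with $u^k$ held fixed, the hypothesis $I_0^k>0$ is a statement about a $\tau$-independent object, $G''(0)=-I_0^k<0$ is strictly negative, and continuity of $G''$ on $[0,\tau^k]$ plus the Lagrange remainder close the argument. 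To repair your version you should either (a) restate the hypothesis at $u^{k-1}$, or (b) switch to the paper's backward parametrization, proving first that $\tau\mapsto u^{k-1}$ is $C^2$ for fixed $u^k$ (your implicit-function-theorem argument adapts, but must be applied to the system with $v$ as an additional unknown, as in Proposition \ref{prop.back}). Note also that your remainder control needs $C^2$-type regularity of $\tau\mapsto K_i(\tau)$, i.e.\ a second Fr\'echet derivative of $A$ or at least uniform continuity of $DA$, which is slightly more than the $C^1$ smoothness you invoke.
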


We assume that the solutions to \eqref{1.rk} are sufficiently regular
such that the integral \eqref{1.I0} can be defined. In the vector-valued case,
$h''(u^k)$ is the Hessian matrix and we interpret
$h''(u^k)(A[u^k])^2$ as the product $A[u^k]^\top h''(u^k)A[u^k]$.
For Runge-Kutta schemes of order $p\ge 2$ (for which $C_{\rm RK}=1$), 
the integral \eqref{1.I0}
corresponds exactly to the second-order time derivative of $H[u(t)]$ for solutions
$u(t)$ to the {\em continuous} equation \eqref{1.eq}.
Observe that the entropy-dissipation estimate \eqref{1.edi} is only {\em local}, 
since the time step restriction depends on the time step $k$.
For implicit Euler schemes (and convex entropies $h$), it is known that
$\tau^k$ can be chosen independent of $k$. For general Runge-Kutta
methods, we cannot prove rigorously that $\tau^k$ stays bounded from below
as $k\to\infty$. However, our numerical experiments in section \ref{sec.num}
indicate that inequality \eqref{1.edi} holds for sufficiently small $\tau>0$
uniformly in $k$. 

\begin{remark}[Exponential decay of the discrete entropy]\label{rem.exp}\rm 
If the convex Sobolev inequality $\int_\Omega A[u^k]h'(u^k)dx\ge
\kappa H[u^k]$ holds for some constant $\kappa>0$ 
and if there exists $\tau^*>0$ such that $\tau^k\ge\tau^*>0$
for all $k\in\N$, we infer from \eqref{1.edi} that
for $\tau:=\tau^*$,
$$
  H[u^k] \le (1+\kappa\tau)^{-k}H[u^0] = \exp(-\eta\kappa t^k)H[u^0],
	\quad\mbox{where }\eta = \frac{\log(1+\kappa\tau)}{\kappa\tau}
	< 1,
$$
which implies exponential decay of the discrete entropy with rate $\eta\kappa$.
This rate converges to the continuous rate $\kappa$ as $\tau\to 0$ and therefore,
it is asymptotically sharp.
\qed
\end{remark}

Theorem \ref{thm.ed} can be generalized to a larger class of entropies,
namely to so-called {\em first-order entropies}
\begin{equation}\label{1.F}
  F[u^k] = \int_\Omega |\na f(u^k)|^2 dx,
\end{equation}
where, for simplicity, we consider only the scalar case with $f:\R\to\R$.
An important example is the Fisher information with $f(u)=\sqrt{u}$.

\begin{theorem}[Entropy-dissipating structure II]\label{thm.ed2}
Let $f\in C^2(\R)$, let $A:D(A)\to X'$ be Fr\'echet differentiable, and
let $(u^k)$ be the Runge-Kutta solution to \eqref{1.rk}.
Assume that the boundary condition $\na f(u^k)\cdot\nu=0$ on $\pa\Omega$
is satisfied. Furthermore, suppose that
\begin{equation}\label{1.I1}
\begin{aligned}
  I_1^k :=
	\int_\Omega \Big( & |\na(f'(u^k)A[u^k]|^2
	- C_{\rm RK}\Delta f(u^k)f'(u^k)DA[u^k](A[u^k]) \\
	&{}- \Delta f(u^k)f''(u^k)(A[u^k])^2\Big)dx > 0.
\end{aligned}
\end{equation}
Then there exists $\tau^k>0$ such that for all $0<\tau\le\tau^k$,
$$
  F[u^k] + \tau\int_\Omega A[u^k]f'(u^k)dx \le F[u^{k-1}].
$$
\end{theorem}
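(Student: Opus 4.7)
The plan is to mirror the proof of Theorem~\ref{thm.ed}: invert the Runge-Kutta step to regard $u^{k-1}$ as a function of $\tau$ with the computed state $u^k$ held fixed, Taylor-expand $F[u^{k-1}]$ about $\tau=0$ to second order, and show that the identity
\[
F[u^{k-1}] - F[u^k] - \tau\int_\Omega A[u^k]f'(u^k)\,dx = \tau^2 I_1^k + O(\tau^3)
\]
holds. Since $I_1^k>0$ by assumption, the right-hand side is strictly positive for all sufficiently small $\tau$, giving the desired threshold $\tau^k$.

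First I would invert the Runge-Kutta step. Substituting $u^{k-1} = u^k - \tau\sum_i b_i K_i$ from \eqref{1.rk} into the stage equations produces the implicit system
\[
K_i = -A\Big[u^k + \tau\Big(\sum_{j=1}^s a_{ij}K_j - \sum_{l=1}^s b_l K_l\Big)\Big], \qquad i=1,\dots,s,
\]
which defines $K_i(\tau)$ smoothly near $\tau=0$ with $u^k$ fixed. At $\tau=0$ one has $K_i=-A[u^k]$; a first-order Taylor expansion in $\tau$ using Fr\'echet differentiability of $A$ and the consistency relations $\sum_j a_{ij}=c_i$, $\sum_l b_l=1$, combined with~\eqref{1.CRK}, yields
\[
u^{k-1}-u^k = \tau A[u^k] + \tfrac{\tau^2}{2}\,C_{\rm RK}\,DA[u^k](A[u^k]) + O(\tau^3).
\]

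Next I would expand $f(u^{k-1})$ in Taylor series about $u^k$ up to quadratic order in $u^{k-1}-u^k$, apply $\na$, square, and integrate over $\Omega$. The boundary condition $\na f(u^k)\cdot\nu = 0$ lets any derivative falling on $f(u^k)$ be transferred by integration by parts. The coefficient of $\tau$ reproduces the linear entropy-production term appearing on the left-hand side of the claimed inequality, and the coefficient of $\tau^2$ splits into three pieces matching the integrand of $I_1^k$ term-by-term: the square of the first-order gradient correction produces $|\na(f'(u^k)A[u^k])|^2$; the $C_{\rm RK}$-part of $u^{k-1}-u^k$ contributes $-C_{\rm RK}\Delta f(u^k)f'(u^k)DA[u^k](A[u^k])$ after one integration by parts; and the quadratic Taylor coefficient of $f$ contributes $-\Delta f(u^k)f''(u^k)(A[u^k])^2$, again after integration by parts.

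The principal technical obstacle is the rigorous justification of the two expansions. For implicit stages one needs an implicit-function argument built on the Fr\'echet differentiability of $A$ and invertibility of the linearised stage operator to obtain smoothly $\tau$-dependent $K_i(\tau)$, and one has to verify that the Taylor remainders are genuinely $O(\tau^3)$ in a norm strong enough to control the gradient-squared integrals defining $F$. Under mild regularity hypotheses on $u^k$ and $A$ both steps are standard; once in place, collecting the coefficient of $\tau^2$ and matching it with $I_1^k$ is routine bookkeeping using the no-flux boundary condition.
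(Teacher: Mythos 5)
Your proposal is correct and follows essentially the same route as the paper: the paper's Proposition~\ref{prop.back} carries out your backward inversion of the Runge--Kutta step via the implicit function theorem (obtaining exactly your expansion $v'(0)=A[u^k]$, $v''(0)=C_{\rm RK}DA[u^k](A[u^k])$), and the proof of Theorem~\ref{thm.ed2} then Taylor-expands $G(\tau)=F[u^k]-F[u^{k-1}]$ with Lagrange remainder and identifies $G''(0)=-I_1^k$ by the same integrations by parts you describe. The only blemish is a harmless factor: the second-order term in your displayed identity should be $\tfrac{\tau^2}{2}I_1^k$, not $\tau^2 I_1^k$.
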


The key idea of the proof of Theorem \ref{thm.ed} (and similarly for Theorem 
\ref{thm.ed2}) is a concavity property of the difference of the entropies 
at two consecutive time steps with respect to the time step size $\tau$. 
To explain this idea, let $u:=u^k$ be fixed and introduce $v(\tau):=u^{k-1}$.
Clearly, $v(0)=u$. A formal Taylor expansion of $G(\tau):=H[u]-H[v(\tau)]$ 
yields
$$
  H[u^k]-H[u^{k-1}] = G(\tau) = G(0) + \tau G'(0) + \frac{\tau^2}{2}G''(\xi^k),
$$
where $0<\xi^k<\tau$.
A computation, made explicit in section \ref{sec.meth}, shows that 
$G'(0)=\int_\Omega A[u^k]h'(u^k)dx$ and $G''(0)=-I_0^k$.
Now, if $G''(0)<0$, there exists $\tau^k>0$ such that $G''(\tau)\le 0$ for
$\tau\in [0,\tau^k]$ and in particular $G''(\xi^k)\le 0$. Consequently,
$G(\tau)\le \tau G'(0)$, which equals \eqref{1.ed}.  
The definition of $v(\tau)$ assumes implicitly that \eqref{1.rk} is 
{\em backward} solvable. We prove in Proposition \ref{prop.back} below that 
this property holds if the operator $A$ is a smooth self-mapping on $X$.

\begin{remark}[Discussion of $\tau^k$]\label{rem.tauk}\rm
Since $(u^k)$ is expected to converge to the stationary solution,
$\lim_{k\to\infty}I_0^k=0$. Thus, in principle, for larger values of $k$,
we expect that $\tau^k$ becomes smaller and smaller, thus restricting the
choice of time step sizes $\tau$. However, practically, the situation is better.
For instance, for the implicit Euler scheme, if $h$ is convex, we obtain
$$
  H[u^k] - H[u^{k-1}] \le \int_\Omega h'(u^k)(u^k-u^{k-1})dx
	= -\tau\int_\Omega h'(u^k)A[u^k]dx
$$
for {\em any} value of $\tau>0$. Moreover, for other (higher-order) 
Runge-Kutta schemes, the numerical experiments in section \ref{sec.num} indicate 
that there exists $\tau^*>0$ such that 
$G''(\tau)\le 0$ holds for all $\tau\in[0,\tau^*]$
uniformly in $k\in\N$. In this situation, inequality \eqref{1.edi} holds
for all $0<\tau\le\tau^*$, and thus our estimate is global. 
In fact, the function $G''$ is numerically even nonincreasing 
in some interval $[0,\tau^*]$ but we are not able to prove this analytically.
\qed
\end{remark}

The {\em second main result} is the specification of the abstract 
conditions \eqref{1.I0} and \eqref{1.I1} for a number of examples: 
a quasilinear diffusion equation,
porous-medium or fast-diffusion equations, a linear diffusion system, and
the fourth-order Derrida-Lebowitz-Speer-Spohn equation (see sections
\ref{sec.de}-\ref{sec.dlss} for details). 
For instance, for the porous-medium equation 
$$
  \pa_t u = \Delta(u^\beta) \mbox{ in }\Omega,\ t>0, \quad
	\na u^\beta\cdot\nu = 0 \mbox{ on }\pa\Omega, \quad u(0)=u^0,
$$
we show that the Runge-Kutta scheme scheme satisfies
$$
  H[u^k] + \tau\beta\int_\Omega(u^k)^{\alpha+\beta-2}|\na u^k|^2 dx
	\le H[u^{k-1}], \quad\mbox{where }
	H[u]=\frac{1}{\alpha(\alpha+1)}\int_\Omega u^{\alpha+1} dx,
$$
for $0<\tau\le\tau^k$ and all $(\alpha,\beta)$ belonging to some region
in $[0,\infty)^2$ (see Figure \ref{fig.0th} below). 
For $\alpha=0$, we write $H[u]=\int_\Omega u(\log u-1)dx$.
In one space dimension and for Runge-Kutta schemes of order $p\ge 2$, 
this region becomes $-2<\alpha-\beta<1$, which is the same condition as for the 
continuous equation (except the boundary values). 
Furthermore, the first-order entropy \eqref{1.F} is dissipated for 
Runge-Kutta schemes of order $p\ge 2$, in one space dimension,
$$
  F[u^k] + \tau C_{\alpha,\beta}\int_\Omega (u^k)^{\alpha+\beta-2}(u^k)_{xx}^2 dx
	\le F[u^{k-1}], \quad\mbox{where }
	F[u]=\int_\Omega (u^{\alpha/2})_x^2 dx,
$$
for $0<\tau\le\tau^k$ and all $(\alpha,\beta)$ belonging to the region
shown in Figure \ref{fig.1st} below, and $C_{\alpha,\beta}>0$ is some constant.
This region is smaller than the region of admissible values $(\alpha,\beta)$
for the continuous entropy. The borders of that region are indicated in the
figure by dashed lines.

The proof of the above results, and namely of $G''(0)<0$, is based on
systematic integration by parts \cite{JuMa06}. The idea of the method
is to formulate integration by parts as manipulations with polynomials
and to conclude the inequality $G''(0)<0$ from a polynomial decision problem.
This problem can be solved directly or by using computer algebra software.

Our {\em third main result} is the relation to geodesic 0-convexity of the
entropy and the Bakry-Emery approach when 
$C_{\rm RK}=1$ (Runge-Kutta scheme of order $p\ge 2$).
Liero and Mielke formulate in \cite{LiMi13} the abstract Cauchy problem 
\eqref{1.eq} as the gradient flow
$$
  \pa_t u = -K[u]DH[u], \quad t>0, \quad u(0)=u^0,
$$
where the Onsager operator $K[u]$ describes the sum of diffusion and reaction
terms. For instance, if $A[u]=\diver(a(u)\na u)$, we can write
$A[u]=\diver(a(u)h''(u)^{-1}\na h'(u))$ and thus, identifying 
$h'(u)$ and $DH[u]$, we have $K[u]\xi=\diver(a(u)h''(u)^{-1}\na\xi)$.
It is shown in \cite{LiMi13} that the entropy $H$ is geodesic $\lambda$-convex if
the inequality
\begin{equation}\label{1.M}
  M(u,\xi) := \langle \xi,DA[u]K[u]\xi\rangle - \frac12\langle\xi,DK[u]A[u]\xi\rangle
	\ge \lambda\langle\xi,K[u]\xi\rangle
\end{equation}
holds for all suitable $u$ and $\xi$. We will prove in section \ref{sec.meth} that
$$
  G''(0) = 2M(u^k,h'(u^k)).
$$
Hence, if $G''(0)\le 0$ then \eqref{1.M} with $\lambda=0$
is satisfied for $u=u^k$ and $\xi=h'(u^k)$,
yielding geodesic $0$-convexity for the semi-discrete entropy.
Moreover, if $G''(0)\le -\lambda G'(0)$ then we obtain geodesic $\lambda$-convexity.
Since $G'(0)=-dH[u]/dt$ and $G''(0)=-d^2H[u]/dt^2$ in the continuous setting, 
the inequality $G''(0)\le -\lambda G'(0)$ can be written as
$$
  \frac{d^2H}{dt^2}[u] \ge -\lambda \frac{dH}{dt}[u],
$$
which corresponds to a variant of the Bakry-Emery condition \cite{BaEm85}, yielding
exponential convergence of $H[u]$ (if $\tau^k\ge\tau^*>0$ for all $k$).
Thus, our results constitute a first step towards a 
{\em discrete Bakry-Emery approach}.

The paper is organized as follows. The abstract method, i.e.\ the proof of
backward solvability and of Theorems \ref{thm.ed} and \ref{thm.ed2}, is
presented in section \ref{sec.meth}. The method is applied in the subsequent
sections to a scalar diffusion equation (section \ref{sec.de}), the porous-medium
equation (section \ref{sec.pme}), a linear diffusion system 
(section \ref{sec.sys}), and the fourth-order Derrida-Lebowitz-Speer-Spohn
equation (section \ref{sec.dlss}). Finally, section \ref{sec.num} is devoted
to some numerical experiments showing that $G''$ is negative in some
interval $[0,\tau^*]$.


\section{The abstract method}\label{sec.meth}

In this section, we show that the Runge-Kutta scheme is backward solvable if
$A$ is a self-mapping and we prove Theorems \ref{thm.ed} and \ref{thm.ed2}.

\begin{proposition}[Backward solvability]\label{prop.back}
Let $(\tau,u^{k})\in[0,\infty)\times X$, where $X$ is some Banach space,
and let $A\in C^2(X,X)$ be a self-mapping. Then there exists $\tau_0>0$,
a neighborhood $V\subset X$ of $u^{k}$, and a function
$v\in C^2([0,\tau_0);X)$ such that \eqref{1.rk} holds for $u^{k-1}:=v(\tau)$.
Moreover, 
\begin{equation}\label{v0}
   v(0)=0, \quad v'(0)=A[u], \quad\mbox{and}\quad v''(0)=C_{\rm RK}DA[u](A[u]).
\end{equation}
\end{proposition}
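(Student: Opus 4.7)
The plan is to apply the implicit function theorem (IFT) twice, first to the stage system and then to the backward update equation, and then to compute $v'(0)$ and $v''(0)$ by implicit differentiation. The hypothesis that $A\in C^2(X,X)$ is a self-mapping is precisely what lets the IFT be applied on open subsets of the Banach space $X$ rather than on a proper subdomain.

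First, I would define the stage map $\Phi:\R\times X\times X^s\to X^s$ by
$$
\Phi_i(\tau,v,K_1,\dots,K_s) = K_i + A\Big[v+\tau\sum_{j=1}^s a_{ij}K_j\Big], \qquad i=1,\dots,s.
$$
Then $\Phi$ is $C^2$, and at $(\tau,v)=(0,u^k)$ with $K_i^0:=-A[u^k]$ one has $\Phi=0$ and $\partial_K\Phi = \mathrm{id}_{X^s}$. The IFT therefore produces $C^2$ maps $K_i(\tau,v)$ on a neighborhood of $(0,u^k)$ with $K_i(0,v)=-A[v]$. Second, I would feed these into the update and set
$$
F(\tau,v) = v + \tau\sum_{i=1}^s b_i K_i(\tau,v) - u^k.
$$
Since $F(0,u^k)=0$ and $\partial_v F(0,u^k)=\mathrm{id}_X$, the IFT yields $\tau_0>0$, a neighborhood $V$ of $u^k$, and a $C^2$ curve $v:[0,\tau_0)\to V$ with $v(0)=u^k$ and $F(\tau,v(\tau))\equiv 0$; this is exactly the required backward solution.

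It remains to compute the first two derivatives at $\tau=0$. Writing $k_i(\tau):=K_i(\tau,v(\tau))$ and differentiating the identity $k_i(\tau)=-A[v(\tau)+\tau\sum_j a_{ij}k_j(\tau)]$ once, then setting $\tau=0$, gives $k_i(0)=-A[u^k]$ and, using $\sum_j a_{ij}=c_i$ and $v'(0)$ determined below,
$$
k_i'(0) = -DA[u^k]\Big(v'(0) + \sum_j a_{ij}k_j(0)\Big) = -(1-c_i)\,DA[u^k](A[u^k]),
$$
once $v'(0)=A[u^k]$ is known. That value, and $v''(0)$, come from differentiating $v(\tau)+\tau\sum_i b_i k_i(\tau)=u^k$ twice: using $\sum_i b_i=1$ one finds $v'(0)+\sum_i b_i k_i(0)=0$, hence $v'(0)=A[u^k]$, and then $v''(0)+2\sum_i b_i k_i'(0)=0$, hence
$$
v''(0) = 2\sum_{i=1}^s b_i(1-c_i)\,DA[u^k](A[u^k]) = C_{\mathrm{RK}}\,DA[u^k](A[u^k]),
$$
matching \eqref{v0}.

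I do not anticipate any serious analytic obstacle: the argument is essentially the inverse-function theorem in a Banach space together with the chain rule, and the invertibility of the linearizations is trivial at $\tau=0$. The only subtle point is making sure the differentiability class is tracked correctly through the composition of the two IFT applications, so that the final $v$ really lies in $C^2$; this is automatic because $A\in C^2$ and IFT preserves the $C^2$ regularity class.
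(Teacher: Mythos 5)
Your proposal is correct and follows essentially the same route as the paper: an implicit function theorem argument around $\tau=0$, where the linearization is (a perturbation of) the identity, followed by implicit differentiation of the stage and update equations using $\sum_i b_i=1$ and $\sum_j a_{ij}=c_i$ to obtain $v'(0)=A[u^k]$ and $v''(0)=C_{\rm RK}DA[u^k](A[u^k])$. The only cosmetic difference is that you apply the IFT twice in sequence (first to the stages, then to the update), whereas the paper applies it once to the combined map $J=(J_0,\dots,J_s)$ on $X^{s+1}$; the derivative computations are identical.
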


The self-mapping assumption is strong for differential operators $A$ but
it is somehow natural in the context of Runge-Kutta methods and valid for
smooth solutions.

\begin{proof}
The idea of the proof is to apply the implicit function theorem in Banach
spaces (see \cite[Corollary 15.1]{Dei85}). To this end, we set $u:=u^k$ and define
the mapping $J=(J_0,\ldots,J_{s}):\R\times X^{s+1}\to X^{s+1}$ by
\begin{align*}
  J_0(\tau,y) &= v - u + \tau\sum_{i=1}^s b_i k_i, \quad\mbox{where } 
	y=(k_1,\ldots,k_s,v), \\
	J_i(\tau,y) &= k_i + A\bigg[v + \tau\sum_{j=1}^s a_{ij}k_j\bigg],
	\quad i=1,\ldots,s.
\end{align*}
The Fr\'echet derivative of $J$ in the direction of $(\tau_h,y_h)$,
where $y_h=(k_{h1},\ldots,k_{hs},v_h)$, reads as
\begin{align*}
  DJ_0(\tau,y)(\tau_h,y_h) &= v_h + \tau_h\sum_{i=1}^s b_i k_{i}
	+ \tau\sum_{i=1}^s b_i k_{hi}, \\
	DJ_i(\tau,y)(\tau_h,y_h) &= k_{hi} 
	+ DA\bigg[v + \tau\sum_{j=1}^s a_{ij}k_j\bigg]
	\bigg(v_h + \tau_h\sum_{j=1}^s a_{ij}k_{j} + \tau\sum_{j=1}^s a_{ij}k_{hj}\bigg), 
\end{align*}
where $i=1,\ldots,s$. Let $\tau_0=0$ and $y_0=(-A[u],\ldots,-A[u],u)$. Then
$J(\tau_0,y_0)=0$ and
$$
  DJ_0(\tau_0,y_0)(0,y_h) = v_h, \quad 
	DJ_i(\tau_0,y_0)(0,y_h) = k_{ih} + DA[u](v_h), \quad i=1,\ldots,s.
$$
The mapping $y_h\mapsto DJ(\tau_0,y_0)(0,y_h)$ is clearly an isomorphism
from $X^{s+1}$ onto $X^{s+1}$. 
By the implicit function theorem, there exist an interval $U\subset [0,\tau_0)$,
a neighborhood $V\subset X^{s+1}$ of $y_0$, and a function $(k,v)\in 
C^2([0,\tau_0);V)$ such that $(k,v)(0)=(-A[u],\ldots,-A[u],u)$ and
$J(\tau,k(\tau),v(\tau))=0$ for all $\tau\in[0,\tau_0)$. 

Implicit differentiation of $J(\tau,k(\tau),v(\tau))=0$ yields
\begin{align*}
  0 &= v'(\tau) + \sum_{i=1}^s b_i k_i(\tau) + \tau\sum_{i=1}^s b_i k_i'(\tau), \\
	0 &= k_i'(\tau) + DA\bigg[v + \tau\sum_{j=1}^s a_{ij}k_j(\tau)\bigg]
	\bigg(v'(\tau) + \sum_{j=1}^s a_{ij} k_j(\tau) + \tau\sum_{j=1}^s a_{ij} 
	k_j'(\tau)\bigg),
\end{align*}
where $i=1,\ldots,s$ and $\tau\in[0,\tau_0)$. Using $\sum_{i=1}^s b_i=1$
and $\sum_{j=1}^s a_{ij}=c_i$, we infer that
\begin{align}
  v'(0) &= -\sum_{i=1}^s b_i k_i(0) = \sum_{i=1}^s b_iA[u] = A[u], \nonumber \\
	k_i'(0) &= -DA[u]\bigg(A[u] - \sum_{j=1}^s a_{ij}A[u]\bigg)
	= -(1-c_i)DA[u](A[u]). \label{aux1}
\end{align} 
Differentiating $J_0(\tau,k(\tau),v(\tau))=0$ twice leads to
$$
  0 = v''(\tau) + 2\sum_{i=1}^s b_i k_i'(\tau) + \tau\sum_{i=1}^s b_ik_i''(\tau).
$$
Because of \eqref{aux1}, this reads at $\tau=0$ as
$$
  v''(0) = -2\sum_{i=1}^s b_ik_i'(0) = 2\sum_{i=1}^s b_i(1-c_i)DA[u](A[u])
	= C_{\rm RK}DA[u](A[u]).
$$
This finishes the proof.
\end{proof}

We prove now Theorems \ref{thm.ed} and \ref{thm.ed2}.

\begin{proof}[Proof of Theorem \ref{thm.ed}.]
We set $u:=u^k$. By Proposition \ref{prop.back},
there exists a backward solution $v\in C^2([0,\tau_0))$ such that
$v(0)=u$, $v'(0)=A[u]$, and $v''(0)=C_{\rm RK}DA[u](A[u])$. Furthermore,
the function $G(\tau)=\int_\Omega(h(u)-h(v(\tau)))dx$ satisfies $G(0)=0$,
\begin{align*}
  G'(0) &= -\int_\Omega h'(v(0))v'(0)dx = -\int_\Omega h'(u)A[u]dx, \\
	G''(0) &= -\int_\Omega\big(h'(v(0))v''(0)
	+ h''(v(0))v'(0)^2\big)dx \\
	&= -\int_\Omega\big(C_{\rm RK}h'(u)DA[u](A[u]) + h''(u)(A[u])^2\big)dx
	= -I_0^k < 0,
\end{align*}
using the assumption.
By continuity, there exists $0<\tau^k<\tau_0$ such that $G''(\xi)\le 0$ for
$0\le\xi\le\tau^k$. 
Then the Taylor expansion $G(\tau)=G(0)+G'(0)\tau+\frac12G''(\xi)\tau^2
\le G'(0)\tau$ concludes the proof.
\end{proof}

\begin{proof}[Proof of Theorem \ref{thm.ed2}]
Following the lines of the previous proof, it is sufficient to compute
$G'(0)$ and $G''(0)$, where now 
$G(\tau)=\int_\Omega(|\na f(u)|^2-|\na f(v(\tau))|^2)dx$. 
Using  integration by parts and the boundary condition $\na f(v)\cdot\nu=0$ 
on $\pa\Omega$, we compute
$$
  G'(0) = -\int_\Omega\na f(v(0))\cdot\na\big( f'(v(0))v'(0)\big)dx
	= \int_\Omega \Delta f(u)f'(v(\tau))A[u]dx,
$$
since $v(0)=u$ and $v'(0)=A[u]$. Furthermore,
again integrating by parts,
\begin{align*}
  G''(\tau) &= -\int_\Omega\Big(\big|\na\big(f'(v(\tau))v'(\tau)\big)\big|^2
	+ \na f(v(\tau))\cdot\na\big(f''(v(\tau))(v'(\tau))^2\big) \\
	&\phantom{xx}{}+ \na f(v(\tau))\cdot\na\big(f'(v(\tau))v''(\tau)\big)\Big)dx \\
	&= -\int_\Omega\Big(\big|\na\big(f'(v(\tau))v'(\tau)\big)\big|^2
	- \Delta f(v(\tau))f''(v(\tau))(v'(\tau))^2 \\
	&\phantom{xx}{}- \Delta f(v(\tau))f'(v(\tau))v''(\tau)\Big)dx.
\end{align*}
Since $v''(0)=C_{\rm RK}DA[u](A[u])$, this reduces at $\tau=0$ to
\begin{align*}
  G''(0) &= -\int_\Omega\Big(|\na(f'(u)A[u])|^2 - \Delta f(u)f''(u)(A[u])^2
	- C_{\rm RK}\Delta f(u)f'(u)DA[u](A[u])\Big)dx.
\end{align*}
This expression equals $-I_1^k$, and the result follows.
\end{proof}

Finally, we show that $G''(0)$ for entropies \eqref{1.H} is related to the 
geodesic convexity condition of \cite{LiMi13}.

\begin{lemma}
Let $A[u]=K(u)DH[u]$ for some symmetric operator $K:D(A)\to X$ 
and Fr\'echet derivative
$DH[u]$, let $G$ be defined as in the proof of Theorem \ref{thm.ed} for a 
solution $u^k$ to the Runge-Kutta scheme \eqref{1.rk} of order $p\ge 2$, and
let $M(u,\xi)$ be given by \eqref{1.M}. Then
$$
  G''(0) = -2M(u^k,DH[u^k]).
$$
\end{lemma}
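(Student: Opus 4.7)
The plan is to start from the explicit expression for $G''(0)$ worked out in the proof of Theorem~\ref{thm.ed} and then convert it into the form~\eqref{1.M} by using the factorization $A[u]=K[u]DH[u]$. Since the scheme has order $p\ge 2$, we have $C_{\rm RK}=1$, so the computation in that proof reduces to
$$
  G''(0) = -\int_\Omega\bigl(h'(u^k)\,DA[u^k](A[u^k]) + h''(u^k)(A[u^k])^2\bigr)\,dx.
$$
Under the natural identifications $\langle DH[u],\xi\rangle=\int_\Omega h'(u)\xi\,dx$ and $\langle\eta,D^2H[u]\xi\rangle=\int_\Omega h''(u)\eta\xi\,dx$, this reads
$$
  G''(0) = -\langle DH[u^k],DA[u^k](A[u^k])\rangle - \langle A[u^k],D^2H[u^k](A[u^k])\rangle.
$$

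The key step I would take is to differentiate the identity $A[u]=K[u]DH[u]$ with respect to $u$ in the direction $w=A[u^k]$ via the product rule,
$$
  DA[u^k](A[u^k]) = DK[u^k](A[u^k])\,DH[u^k] + K[u^k]\,D^2H[u^k](A[u^k]).
$$
Pairing with $DH[u^k]$, using the symmetry of $K[u^k]$ together with $K[u^k]DH[u^k]=A[u^k]$, the second summand transforms into
$$
  \langle DH[u^k],K[u^k]D^2H[u^k](A[u^k])\rangle = \langle A[u^k],D^2H[u^k](A[u^k])\rangle,
$$
so that
$$
  \langle DH[u^k],DA[u^k](A[u^k])\rangle = \langle DH[u^k],DK[u^k](A[u^k])DH[u^k]\rangle + \langle A[u^k],D^2H[u^k](A[u^k])\rangle.
$$

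This last identity lets me eliminate the Hessian term from the expression for $G''(0)$: solving for $\langle A[u^k],D^2H[u^k](A[u^k])\rangle$ and substituting back yields
$$
  G''(0) = -2\langle DH[u^k],DA[u^k](A[u^k])\rangle + \langle DH[u^k],DK[u^k](A[u^k])DH[u^k]\rangle,
$$
which is precisely $-2\,M(u^k,DH[u^k])$ by the definition~\eqref{1.M} (with $\xi=DH[u^k]$ and $K[u^k]\xi=A[u^k]$).

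The main obstacle is not algebraic but notational: one must fix the duality pairing $\langle\cdot,\cdot\rangle$ and agree on how $DK[u]$ acts (as a map from $X$ into $\mathcal{L}(X',X)$) so that the product rule applied to $A[u]=K[u]DH[u]$ is unambiguous, and then verify that the symmetry of $K[u^k]$ can indeed be invoked on the pair $(DH[u^k],D^2H[u^k](A[u^k]))$. Once these identifications are settled the remaining calculation is a one-line rearrangement of three inner products.
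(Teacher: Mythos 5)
Your proposal is correct and follows essentially the same route as the paper: both start from $G''(0)=-\langle\xi,DA[u](A[u])\rangle-\langle A[u],h''(u)A[u]\rangle$ with $C_{\rm RK}=1$, apply the product rule to $A[u]=K[u]DH[u]$, and use the symmetry of $K[u]$ to convert $\langle DH[u],K[u]D^2H[u](A[u])\rangle$ into the Hessian term. The only difference is cosmetic bookkeeping --- you solve for the Hessian term and substitute, whereas the paper adds and subtracts $\langle\xi,DK[u](K[u]\xi)\xi\rangle$ to reassemble $2\langle\xi,DA[u](K[u]\xi)\rangle$ --- and both land on the same identity $G''(0)=-2M(u^k,DH[u^k])$.
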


\begin{proof}
The proof is just a (formal) calculation. Recall that for Runge-Kutta schemes
of order $p\ge 2$, we have $C_{\rm RK}=1$.
Set $u:=u^k$ and identify $DH[u]$ with $\xi=h'(u)$. 
Inserting the expression $DA[u](v)=DK[u](v)h'(u)+ K[u]h''(u)v$ into the
definition of $G''(0)$, we find that
\begin{align*}
  -G''(0) &= \langle\xi,DA[u](A[u])\rangle + \langle A[u],h''(u)A[u]\rangle \\
	&= \big\langle\xi,DK[u](A[u])\xi + K[u]h''(u)A[u]\big\rangle
	+ \langle A[u],h''(u)A[u]\rangle \\
	&= \langle\xi,DK[u](K[u]\xi)\xi\rangle + \langle\xi,K[u]h''(u)K[u]\xi\rangle
	+ \langle K[u]\xi,h''(u)K[u]\xi\rangle \\
	&= \langle\xi,DK[u](K[u]\xi)\xi\rangle + 2\langle\xi,K[u]h''(u)K[u]\xi\rangle,
\end{align*}
since $K[u]$ is assumed to be symmetric. Rearranging the terms, we obtain
\begin{align*}
  -G''(0) &= 2\langle\xi,DK[u](K[u]\xi)\xi\rangle 
	+ 2\langle\xi,K[u]h''(u)K[u]\xi\rangle 
	- \langle\xi,DK[u](K[u]\xi)\rangle \\
	&= 2\langle\xi,DA[u](K[u]\xi)\xi\rangle - \langle\xi,DK[u](A[u])\rangle
	= 2M(u,\xi),
\end{align*}
which proves the claim.
\end{proof}


\section{Scalar diffusion equation}\label{sec.de}

In this section, we analyze time-discrete Runge-Kutta schemes of the
diffusion equation
\begin{equation}\label{de.eq}
  \pa_t u = \diver(a(u)\na u), \quad t>0, \quad u(0)=u^0,
\end{equation}
with periodic or homogeneous Neumann boundary conditions.
This equation, also including a drift term, was analyzed in \cite{LiMi13}
in the context of geodesic convexity. 
Our results are similar to
those in \cite{LiMi13} but we consider the time-discrete and not the continuous
equation and we employ systematic integration by parts \cite{JuMa06}.

Setting $\mu(u)=a(u)/h''(u)$, we can write the diffusion equation as a formal
gradient flow:
$$
  \pa_t u = -A[u] := \diver(\mu(u)\na h'(u)), \quad t>0.
$$
We prove that the Runge-Kutta scheme \eqref{1.rk} dissipates all convex entropies
subject to some conditions on the functions $\mu$ and $h$.

\begin{theorem}\label{thm.de}
Let $\Omega\subset\R^d$ be convex with smooth boundary.
Let $(u^k)$ be a sequence of (smooth) solutions to the Runge-Kutta scheme 
\eqref{1.rk} of the diffusion equation \eqref{de.eq}. Let $k\in\N$ be fixed and
$u^k$ be not equal to the constant steady state of \eqref{de.eq}.
We suppose that for all admissible $u$, it holds that
$a(u)\ge 0$, $h''(u)\ge 0$,
\begin{align}\label{de.cond1}
  & b(u) := \frac23(C_{\rm RK}+1)\int_{u_0}^u \mu(v)\mu'(v)h''(v)dv \ge 0, \\
	& \frac{d-1}{d}b(u) \le (C_{\rm RK}+1)h''(u)\mu(u)^2, \label{de.cond2} \\
	& (C_{\rm RK}+2)\mu(u)\mu''(u) + (C_{\rm RK}-1)\mu'(u)^2 < 0. \label{de.cond3}
\end{align}
Then there exists $\tau^k>0$ such that for all $0<\tau<\tau^k$,
$$
  H[u^k] + \tau\int_\Omega h''(u^k)a(u^k)|\na u^k|^2 dx \le H[u^{k-1}].
$$
\end{theorem}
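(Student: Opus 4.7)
The plan is to invoke Theorem~\ref{thm.ed}: it suffices to verify that the abstract integral $I_0^k$ in \eqref{1.I0} is strictly positive whenever $u^k$ is not the constant steady state. The hypotheses \eqref{de.cond1}--\eqref{de.cond3} are, as I will outline, exactly what emerges from carrying out the systematic integration by parts method of \cite{JuMa06} on $I_0^k$.

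First, I would compute the differentials explicitly. Writing $A[u] = -\diver(a(u)\na u) = -a(u)\Delta u - a'(u)|\na u|^2$ and $DA[u](w) = -\diver(a'(u) w \na u + a(u) \na w)$, I substitute $w = A[u]$ in \eqref{1.I0} and integrate by parts once against $h'(u)$ (using $\na u \cdot \nu = 0$ on $\pa\Omega$) to remove the outer divergence, then a second time to eliminate the third-order derivatives hidden in $\na A[u]$. What remains is an integral over $\Omega$ of a polynomial of degree two in the ``derivative monomials'' $\Delta u$, $\na u \cdot \na^2 u \cdot \na u$, $|\na u|^2$, with coefficients built from $h''$, $a$, $a'$, $a''$ (equivalently $\mu$, $\mu'$, $\mu''$, since $\mu = a/h''$).

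Next comes the systematic integration by parts step. I would introduce ``shift'' identities arising from divergences of vector fields of the form $\eta(u) |\na u|^2 \na u$ and $\eta(u) \na u$, whose integrals vanish by the Neumann condition; these provide free one-parameter families of relations among the above monomials. Combined with the pointwise algebraic inequality $(\Delta u)^2 \le d |\na^2 u|^2$, which explains the factor $(d-1)/d$ in \eqref{de.cond2}, and with the Bochner-type identity $\int_\Omega ((\Delta u)^2 - |\na^2 u|^2)\,dx = -\int_{\pa\Omega} \mathrm{II}(\na u, \na u)\,ds$ (whose boundary contribution has the favourable sign thanks to convexity of $\Omega$ under our Neumann condition), optimizing the free multipliers recasts $I_0^k$ as a pointwise nonnegative quadratic form plus a residual multiple of $|\na u|^4$. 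Requiring this quadratic form to be positive semi-definite produces conditions \eqref{de.cond2} and \eqref{de.cond3}, while the residual coefficient of $|\na u|^4$ turning out to be $b(u)$ yields \eqref{de.cond1}. Strict positivity of $I_0^k$ away from the steady state is forced by the strict inequality in \eqref{de.cond3}. Theorem~\ref{thm.ed} then supplies $\tau^k>0$, and a final integration by parts converts $\int_\Omega h'(u^k) A[u^k]\,dx$ into $\int_\Omega h''(u^k) a(u^k) |\na u^k|^2\,dx$.

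The main obstacle is the bookkeeping of the systematic integration by parts: one has to choose the shift multipliers so that the three conditions decouple cleanly and the corresponding coefficients line up with $\mu \mu' h''$, $h''\mu^2$, and $\mu\mu''$, $(\mu')^2$ as in the theorem. In particular, the $(d-1)/d$ factor in \eqref{de.cond2} is the subtlest point, requiring one to use the pointwise Hessian Cauchy--Schwarz inequality together with the Bochner boundary identity; this is also the only place where the convexity hypothesis on $\Omega$ is used.
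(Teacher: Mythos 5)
Your proposal follows essentially the same route as the paper: reduce everything to $I_0^k=-G''(0)>0$ via Theorem~\ref{thm.ed}, then apply systematic integration by parts with free $u$-dependent multipliers, using the Hessian Cauchy--Schwarz inequality (the source of the $(d-1)/d$ factor) and the convexity of $\Omega$ to control the boundary term of a weighted Bochner-type identity. The only discrepancies are bookkeeping ones: the paper's divergence identities are $\diver\big(\Gamma_1(u)(\na^2\xi-\Delta\xi\,{\mathbb I})\na\xi\big)$ and $\diver\big(\Gamma_2(u)|\na\xi|^2\na\xi\big)$ with $\xi=h'(u)$ (your $\diver(\eta(u)\na u)$ has the wrong homogeneity to be useful), and in the paper's accounting \eqref{de.cond1} is the sign condition $\Gamma_1=-b\le 0$ needed for the boundary term and the $\xi_R^2,\xi_S^2$ coefficients, while it is the coefficient of $|\na\xi|^4$ that produces \eqref{de.cond3}.
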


Conditions \eqref{de.cond1}-\eqref{de.cond2} correspond to (4.12) in \cite{LiMi13}.
Condition \eqref{de.cond3} is satisfied for concave functions $\mu$, 
except for the explicit Euler scheme 
($C_{\rm RK}=2$) for which we need additionally $4\mu\mu''+(\mu')^2<0$.
For the implicit Euler scheme, we may allow even for nonconcave mobilities $\mu$,
e.g.\ $\mu(u)=u^\gamma$ for $1<\gamma<2$.

\begin{proof}
According to Theorem \ref{thm.ed}, we only need to show that $I_0^k=-G''(0)>0$.
To simplify, we set $u:=u^k$.
First, we observe that the boundary condition $\na u\cdot\nu=0$ on $\Omega$
implies that $0=\pa_t\na u\cdot\nu=\na\pa_t u\cdot\nu=-\na A[u]\cdot\nu$
on $\pa\Omega$.
Using $DA[u](A[u]) = \diver(a'(u)A[u]\na u+a(u)\na A[u])=\Delta(a(u)A[u])$, 
the abbreviation $\xi=h'(u)$, and integration by parts, we compute
\begin{align*}
  G''(0) &= -\int_\Omega\Big(C_{\rm RK}h'(u)\Delta(a(u)A[u]) 
	+ h''(u)\big(\diver(\mu(u)\na h'(u))\big)^2\Big)dx \\
  &= \int_\Omega\Big(C_{\rm RK}\na h'(u)\cdot\na(a(u)A[u])
	- h''(u)\big(\mu'(u)\na u\cdot\na h'(u)+\mu(u)\Delta h'(u)\big)^2\Big)dx \\
	&= -\int_\Omega\bigg(C_{\rm RK}\Delta\xi a(u)A[u]
	+ h''(u)\left(\frac{\mu'(u)}{h''(u)}|\na\xi|^2 + \mu(u)\Delta\xi\right)^2
	\bigg)dx.
\end{align*}
The boundary integrals vanish since $\na u\cdot\nu=\na A[u]\cdot\nu=0$ on
$\pa\Omega$. Replacing $A[u]$ by $\diver(\mu(u)\na\xi)
=\mu(u)\Delta\xi+\mu'(u)|\na\xi|^2/h''(u)$ and expanding the
square, we arrive at
\begin{align}
  G''(0) &= -\int_\Omega\bigg(\big(C_{\rm RK}a(u)\mu(u)+h''(u)\mu(u)^2\big)
	(\Delta\xi)^2 \nonumber \\
	&\phantom{xx}{}+ \left(C_{\rm RK}a(u)\frac{\mu'(u)}{h''(u)}+2\mu(u)\mu'(u)\right) 
	\Delta\xi|\na\xi|^2 + \frac{\mu'(u)^2}{h''(u)}|\na\xi|^4\bigg)dx 
	\label{diffeq.G2} \\
	&= -\int_\Omega\big((C_{\rm RK}+1)h''(u)\mu(u)^2 \xi_L^2
	+ (C_{\rm RK}+2)\mu(u)\mu'(u)\xi_L\xi_G^2 + \mu'(u)^2 h''(u)^{-1}\xi_G^4\big)dx,
	\nonumber
\end{align}
where we have employed the identity $a(u)=\mu(u)h''(u)$ and the abbreviations 
$\xi_G=|\na\xi|$ and $\xi_L=\Delta\xi$. 

We apply now the method of systematic integration by parts \cite{JuMa06}.
The idea is to identify useful integration-by-parts formulas and to add
them to $G''(0)$ without changing the sign of $G''(0)$. The first formula is given by
\begin{equation}\label{de.ibp1}
  \int_\Omega\diver\big(\Gamma_1(u)(\na^2\xi-\Delta\xi{\mathbb I})\cdot\na\xi\big)dx
	= \int_{\pa\Omega}\Gamma_1(u)\na\xi^\top(\na^2\xi-\Delta\xi{\mathbb I})\nu ds,
\end{equation}
where $\Gamma_1(u)\le 0$ is an arbitrary (smooth) scalar function which still 
needs to be chosen, and $\mathbb{I}$ is the unit matrix in $\R^{d\times d}$.
The left-hand side can be expanded as
\begin{align*}
  \int_\Omega & \left(\frac{\Gamma_1'(u)}{h''(u)}\na\xi^\top
	(\na^2\xi-\Delta\xi{\mathbb I})
	\na\xi + \Gamma_1(u)\na^2\xi:(\na^2\xi-\Delta\xi{\mathbb I})\right)dx \\
  &= \int_\Omega\left(\frac{\Gamma_1(u)}{h''(u)}\xi_{GHG} 
	- \frac{\Gamma_1'(u)}{h''(u)}\xi_L\xi_G^2 
	+ \Gamma_1(u)\xi_H^2 - \Gamma_1(u)\xi_L^2\right)dx,
\end{align*}
where we have set $\xi_{GHG}=\na\xi^\top\na^2\xi\na\xi$ and $\xi_H=|\na^2\xi|$.
The boundary integral in \eqref{de.ibp1} becomes
$$
  \int_{\pa\Omega}\Gamma_1(u)\left(\frac12\na(|\na\xi|^2)
	-\Delta\xi\na\xi\right)\cdot\nu ds
	= \frac12\int_{\pa\Omega}\Gamma_1(u)\na(|\na\xi|^2)\cdot\nu ds \ge 0,
$$ 
since $\Gamma_1(u)\le 0$, $\na\xi\cdot\nu=0$
on $\pa\Omega$, and it holds that $\na(|\na\xi|^2)\cdot\nu\le 0$ on $\pa\Omega$ 
for all smooth functions satisfying $\na\xi\cdot\nu=0$ on $\pa\Omega$
\cite[Prop.~4.2]{LiMi13}. Here we need the convexity of $\Omega$.
Thus, the first integration-by-parts formula becomes
\begin{equation}\label{de.J1}
  J_1 := \int_\Omega\left(\frac{\Gamma_1'(u)}{h''(u)}\xi_{GHG} 
	- \frac{\Gamma_1'(u)}{h''(u)}\xi_L\xi_G^2 + \Gamma_1(u)\xi_H^2 
	- \Gamma_1(u)\xi_L^2\right)dx \ge 0.
\end{equation}
The second formula reads as
\begin{align}\label{de.J2}
  0 &= \int_\Omega\diver\big(\Gamma_2(u)|\na\xi|^2\na\xi)dx \\
	&= \int_\Omega\left(\frac{\Gamma_2'(u)}{h''(u)}\xi_G^4 + 2\Gamma_2(u)\xi_{GHG}
	+ \Gamma_2(u)\xi_L\xi_G^2\right)dx =: J_2, \nonumber
\end{align}
where $\Gamma_2$ is an arbitrary scalar function.
The goal is to find functions $\Gamma_1(u)\le 0$ and $\Gamma_2(u)$ such that 
$G''(0)\le G''(0)+J_1+J_2 < 0$.

According to \cite{JuMa08}, the computations simplify if we introduce the variables 
$\xi_R$ and $\xi_S$ satisfying
$$
  (d-1)\xi_G^2\xi_S = \xi_{GHG} - \frac{1}{d}\xi_L\xi_G^2, \quad
	\xi_H^2 = \frac{1}{d}\xi_L^2 + d(d-1)\xi_S^2 + \xi_R^2.
$$
The existence of $\xi_R$ follows from the inequality
$$
  \xi_H^2 = |\na^2\xi|^2 \ge \frac{1}{d}(\Delta\xi)^2 + \frac{d}{d-1}
	\left(\frac{\na\xi^\top\na^2\xi\na\xi}{\na\xi^2} - \frac{\Delta\xi}{d}\right)^2
	= \frac{1}{d}\xi_L^2 + d(d-1)\xi_S^2,
$$
which is proven in \cite[Lemma 2.1]{JuMa08}. Then
\begin{equation}\label{de.poly}
  G''(0) \le G''(0) + J_1 + J_2 
	= -\int_\Omega\big(a_1\xi_L^2 + a_2 \xi_L\xi_G^2 + a_3\xi_G^4
	+ a_4\xi_S\xi_G^2 + a_5\xi_R^2 + a_6\xi_S^2\big)dx,
\end{equation}
where 
\begin{equation}\label{de.ai}
\begin{aligned}
  a_1 &= (C_{\rm RK}+1)h''(u)\mu(u)^2 + \left(1-\frac{1}{d}\right)\Gamma_1(u), \\
	a_2 &= (C_{\rm RK}+2)\mu(u)\mu'(u) 
	+ \left(1-\frac{1}{d}\right)\frac{\Gamma_1'(u)}{h''(u)}
	- \left(\frac{2}{d}+1\right)\Gamma_2(u), \\
	a_3 &= \frac{\mu'(u)^2-\Gamma_2'(u)}{h''(u)}, \quad
	a_4 = -(d-1)\left(\frac{\Gamma_1'(u)}{h''(u)} + 2\Gamma_2(u)\right), \\
	a_5 &= -\Gamma_1(u), \quad a_6 = -d(d-1)\Gamma_1(u).
\end{aligned}
\end{equation}
The aim now is to determine conditions on $a_1,\ldots,a_6$ such that the
polynomial $P(\xi)=a_1\xi_L^2 + a_2 \xi_L\xi_G^2 + a_3\xi_G^4 + a_4\xi_S\xi_G^2 
+ a_5\xi_R^2 + a_6\xi_S^2$ is nonnegative
as this implies that $G''(0)\le 0$. In the general case, this
leads to nonlinear ordinary differential equations for $\Gamma_1$ and $\Gamma_2$
which cannot be easily solved. A possible solution is to require that the
coefficients of the mixed terms vanish, i.e.\ $a_2=a_4=0$, and that
the remaining coefficients are nonnegative. The case $d=1$ being simpler
than the general case (since $J_1$ is not necessary), we assume that $d>1$.
Then $a_4=0$ implies that $\Gamma_1'(u)/h''(u)=-2\Gamma_2(u)$. 
Replacing $\Gamma_1'(u)/h''(u)$ by $-2\Gamma_2(u)$
in $a_2=0$ gives
$$
  \Gamma_2(u) = \frac{C_{\rm RK}+2}{3}\mu(u)\mu'(u).
$$
On the other hand, replacing $\Gamma_2(u)$ by $-\Gamma_1'(u)/(2h''(u))$ in $a_2=0$,
we find that
$$
  \Gamma_1'(u) = -\frac23(C_{\rm RK}+2)\mu(u)\mu'(u)h''(u)
$$
or, after integration,
$$
  \Gamma_1(u) = -\frac23(C_{\rm RK}+2)\int_{u_0}^u\mu(v)\mu'(v)h''(v)dv.
$$
These functions have to satisfy the conditions
\begin{align*}
  a_1 &\ge 0\quad\mbox{or}\quad \frac{d-1}{d}\Gamma_1(u) \ge
	-(C_{\rm RK}+1)h''(u)\mu(u)^2, \\
  a_3 &\ge 0\quad\mbox{or}\quad 
	(C_{\rm RK}+2)\mu(u)\mu''(u) + (C_{\rm RK}-1)\mu'(u)^2 \le 0, \\
	a_5 &\ge 0\quad\mbox{or}\quad \Gamma_1(u)\le 0\quad\mbox{for all }u,
\end{align*}
Note that  $a_1\ge 0$ and $a_5\ge 0$ correspond to \eqref{de.cond2} and
\eqref{de.cond1}, respectively.
This shows that $P(\xi)\ge 0$ for all $\xi\in\R^4$ and $G''(0)\le 0$.

If $G''(0)=0$, the nonnegative polynomial $P$, which depends on $x\in\Omega$
via $\xi$, has to vanish. In particular, $a_3\xi_G^4=a_3|\na u|^4=0$ in
$\Omega$. As $a_3>0$ by assumption, $u(x)=\mbox{const.}$ for $x\in\Omega$.
This contradicts the hypothesis that $u$ is not a steady state.
Consequently, $G''(0)<0$, and we finish the proof by setting $b(u)=-\Gamma_1(u)$.
\end{proof}


\section{Porous-medium equation}\label{sec.pme}

The results of the previous section can be applied in principle to the
Runge-Kutta scheme for the porous-medium or fast-diffusion equation
\begin{equation}\label{pm.eq}
  \pa_t u = \Delta(u^\beta)\quad\mbox{in }\Omega,\ t>0, \quad
	\na u^\beta\cdot\nu=0\quad\mbox{on }\pa\Omega, \quad u(0)=u^0,
\end{equation}
where $\beta>0$. 
It can be seen that conditions \eqref{de.cond1}-\eqref{de.cond3} are
not optimal for particular entropies. 
This is not surprising since we have neglected the mixed terms
in the polynomial in \eqref{de.poly} (i.e.\ $a_2=a_4=0$) which is not optimal.
In this section, we make a different approach by making an ansatz for the
functions $\Gamma_1$ and $\Gamma_2$, considering both zeroth-order and
first-order entropies.


\subsection{Zeroth-order entropies}

We prove the following result.

\begin{theorem}\label{thm.pm}
Let $\Omega\subset\R^d$ be convex with smooth boundary.
Let $(u^k)$ be a sequence of (smooth) solutions to the Runge-Kutta scheme
\eqref{1.rk} for  \eqref{pm.eq}. Let the entropy be given by 
$H[u]=\alpha^{-1}(\alpha+1)^{-1}\int_\Omega u^{\alpha+1}dx$ with $\alpha>0$, 
let $k\in\N$, and let $u^k$ be not the constant steady state of \eqref{pm.eq}.
There exists a nonempty region $R_0(d)\subset(0,\infty)^2$ and $\tau^k>0$ 
such that for all $(\alpha,\beta)\in R_0(d)$ and $0<\tau\le\tau^k$,
$$
  H[u^k] + \tau\beta\int_\Omega (u^k)^{\alpha+\beta-2}|\na u^k|^2 dx
	\le H[u^{k-1}], \quad k\in\N.
$$
In one space dimension, we have 
\begin{align*}
  &\mbox{implicit Euler:} & R_0(1) &= (0,\infty)^2, \\
	&\mbox{Runge-Kutta of order }p\ge 2: &
	R_0(1) &= \big\{(\alpha,\beta)\in(0,\infty)^2: -2<\alpha-\beta<1\}, \\
	&\mbox{explicit Euler:} & 
  R_0(1) &= \big\{(\alpha,\beta)\in(0,\infty)^2: -1<\alpha-\beta<1\}.
\end{align*}
\end{theorem}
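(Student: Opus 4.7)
The plan is to apply Theorem \ref{thm.ed} after exploiting the power-law scaling of the porous-medium nonlinearity to sharpen the polynomial estimate developed in Theorem \ref{thm.de}. For equation \eqref{pm.eq} with entropy density $h(u)=u^{\alpha+1}/(\alpha(\alpha+1))$ one has $h''(u)=u^{\alpha-1}$, $a(u)=\beta u^{\beta-1}$ and $\mu(u)=\beta u^{\gamma}$ with $\gamma:=\beta-\alpha$. Setting $C:=C_{\rm RK}$ and $\xi:=h'(u^k)=(u^k)^\alpha/\alpha$, formula \eqref{diffeq.G2} specializes to
\begin{equation*}
  G''(0) = -\int_\Omega\Big[(C+1)\beta^2 u^{2\gamma+\alpha-1}\xi_L^2 + (C+2)\beta^2\gamma\, u^{2\gamma-1}\xi_L\xi_G^2 + \beta^2\gamma^2 u^{2\gamma-\alpha-1}\xi_G^4\Big]dx.
\end{equation*}
By Theorem \ref{thm.ed} it is enough to exhibit an open region of exponents $(\alpha,\beta)$ on which $G''(0)<0$; the claimed entropy-production term is simply $\tau\int_\Omega A[u^k]h'(u^k)dx=\tau\beta\int_\Omega (u^k)^{\alpha+\beta-2}|\na u^k|^2 dx$, obtained by one integration by parts against the Neumann boundary condition.

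In one space dimension I would take $\Gamma_1\equiv 0$, so that \eqref{de.J1} is unused, and employ only the identity \eqref{de.J2} with the scale-matched ansatz $\Gamma_2(u)=g_2 u^{2\gamma-1}$, the unique monomial whose contributions to the coefficients of $\xi_L\xi_G^2$ and $\xi_G^4$ carry the same powers of $u$ as those already present in $G''(0)$. Using $\xi_{GHG}=\xi_L\xi_G^2$ in 1D, a short calculation rewrites $G''(0)+J_2=-\int_\Omega(A_1\xi_L^2+A_2\xi_L\xi_G^2+A_3\xi_G^4)dx$ with
\begin{align*}
  A_1 &= (C+1)\beta^2 u^{2\gamma+\alpha-1}, \\
  A_2 &= \bigl[(C+2)\beta^2\gamma - 3g_2\bigr] u^{2\gamma-1}, \\
  A_3 &= \bigl[\beta^2\gamma^2 - (2\gamma-1)g_2\bigr] u^{2\gamma-\alpha-1}.
\end{align*}
Crucially the $u$-powers in $A_1A_3$ and $A_2^2$ coincide, so the pointwise nonnegativity condition reduces to $A_3\ge 0$ together with the $u$-independent discriminant inequality $A_2^2\le 4A_1A_3$. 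The latter is quadratic in $g_2$, and after invoking the algebraic identity $(C+2)^2-4(C+1)=C^2$ its feasibility in $g_2$ collapses to
\begin{equation*}
  \bigl[C(\gamma-2)-2(\gamma+1)\bigr]^2 \ge 9C^2\gamma^2.
\end{equation*}
Substituting $C=0,1,2$ and solving for $\gamma$ yields, respectively, all $\gamma\in\R$, the interval $-1\le\gamma\le 2$, and the interval $-1\le\gamma\le 1$. Translating through $\gamma=\beta-\alpha$ and taking strict interiors (to secure strict positive-definiteness of the quadratic form, and hence $G''(0)<0$ in conjunction with $u^k\not\equiv\mathrm{const}$) produces precisely the three sets $R_0(1)$ claimed.

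For a convex domain in dimension $d\ge 2$ the same philosophy applies, but both auxiliary functions are needed, again as scale-matched powers $\Gamma_1(u)=g_1 u^{2\gamma+\alpha-1}$ and $\Gamma_2(u)=g_2 u^{2\gamma-1}$, subject to the sign constraint $g_1\le 0$ that makes the boundary contribution in \eqref{de.ibp1} nonnegative via convexity of $\Omega$. Then $G''(0)+J_1+J_2$ matches the six-term polynomial of \eqref{de.poly} with $u$-homogeneous coefficients \eqref{de.ai}, and the existence of a nonempty $R_0(d)$ reduces to feasibility of the resulting system of algebraic inequalities in $(g_1,g_2)$ on an open subset of $(0,\infty)^2$.

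The main technical hurdle is precisely this multivariate feasibility step for $d\ge 2$: in higher dimensions one cannot eliminate both cross terms $a_2$ and $a_4$ while simultaneously keeping $a_1,a_3,a_5,a_6\ge 0$ throughout the intended parameter region, so a careful trade-off between the coefficients must be struck and the accessible region shrinks. The 1D miracle is the factorization $(C+2)^2-4(C+1)=C^2$, which splits the discriminant into a product of two linear factors in $\gamma$ and produces the explicit closed-form ranges of $R_0(1)$ simultaneously for all three scheme classes.
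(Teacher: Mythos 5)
Your argument follows essentially the same route as the paper: the same scale-matched monomial ansatz for $\Gamma_1,\Gamma_2$ (your exponents $2\gamma+\alpha-1$ and $2\gamma-1$ coincide with the paper's $2\beta-\alpha-1$ and $2\beta-2\alpha-1$), the same reduction to a $u$-homogeneous quadratic form, and in one dimension the identical feasibility computation. Your quadratic condition $9c_2^2+2[(C-2)\gamma-2(C+1)]c_2+C^2\gamma^2\le 0$ is exactly the paper's inequality \eqref{pm.aux} after the substitution $\gamma=\beta-\alpha$, and your discriminant $[(C-2)\gamma-2(C+1)]^2\ge 9C^2\gamma^2$ yields the same three intervals $R_0(1)$. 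Your handling of strictness (take the open region so the discriminant is strictly positive, bound $P\ge\mbox{const}\cdot\xi_G^4$, and use that $u^k$ is not constant) is a legitimate, slightly cleaner variant of the paper's argument that the vanishing of the polynomial forces an unsolvable linear system for $(c_1,c_2)$.

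The genuine gap is in the case $d\ge 2$. The theorem asserts that $R_0(d)$ is \emph{nonempty} for every $d$, and you explicitly defer the "multivariate feasibility step" as the main technical hurdle without resolving it, so as written the multidimensional claim is not proved. The paper closes this in two steps: first, it completes squares in $\eta_L$ and $\eta_S$, parametrizes $c_1=-\lambda(C_{\rm RK}+1)/(1-1/d)$ with $\lambda\in(0,1)$, and reduces feasibility to the sign of the discriminant of a quadratic in $c_2$ (and then of a quadratic in $\lambda$, resolved by computer algebra); second — and this is the one-line fix you are missing — it verifies nonemptiness directly by checking that $(\alpha,\beta)=(1,1)$ with $c_1=-1$, $c_2=0$ gives $Q(\eta)=(C_{\rm RK}+\tfrac1d)\eta_L^2+\eta_R^2+d(d-1)\eta_S^2\ge 0$. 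A minor side remark: your statement that in higher dimensions one \emph{cannot} kill both cross terms $a_2,a_4$ while keeping the diagonal coefficients nonnegative overstates the situation — Theorem \ref{thm.de} does exactly that; the point of Section \ref{sec.pme} is only that this choice is suboptimal, not infeasible.
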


For the implicit Euler scheme, the theorem shows that 
any positive values for $(\alpha,\beta)$ is admissible
which corresponds to the continuous situation. For the Runge-Kutta case
with $C_{\rm RK}=1$, our condition is more restrictive. As expected, the
explicit Euler scheme requires the most restrictive condition.
The set $R_0(d)$ is illustrated in Figure \ref{fig.0th} for $d=2$ and $d=10$.

\begin{figure}[ht]
\includegraphics[width=65mm,height=55mm]{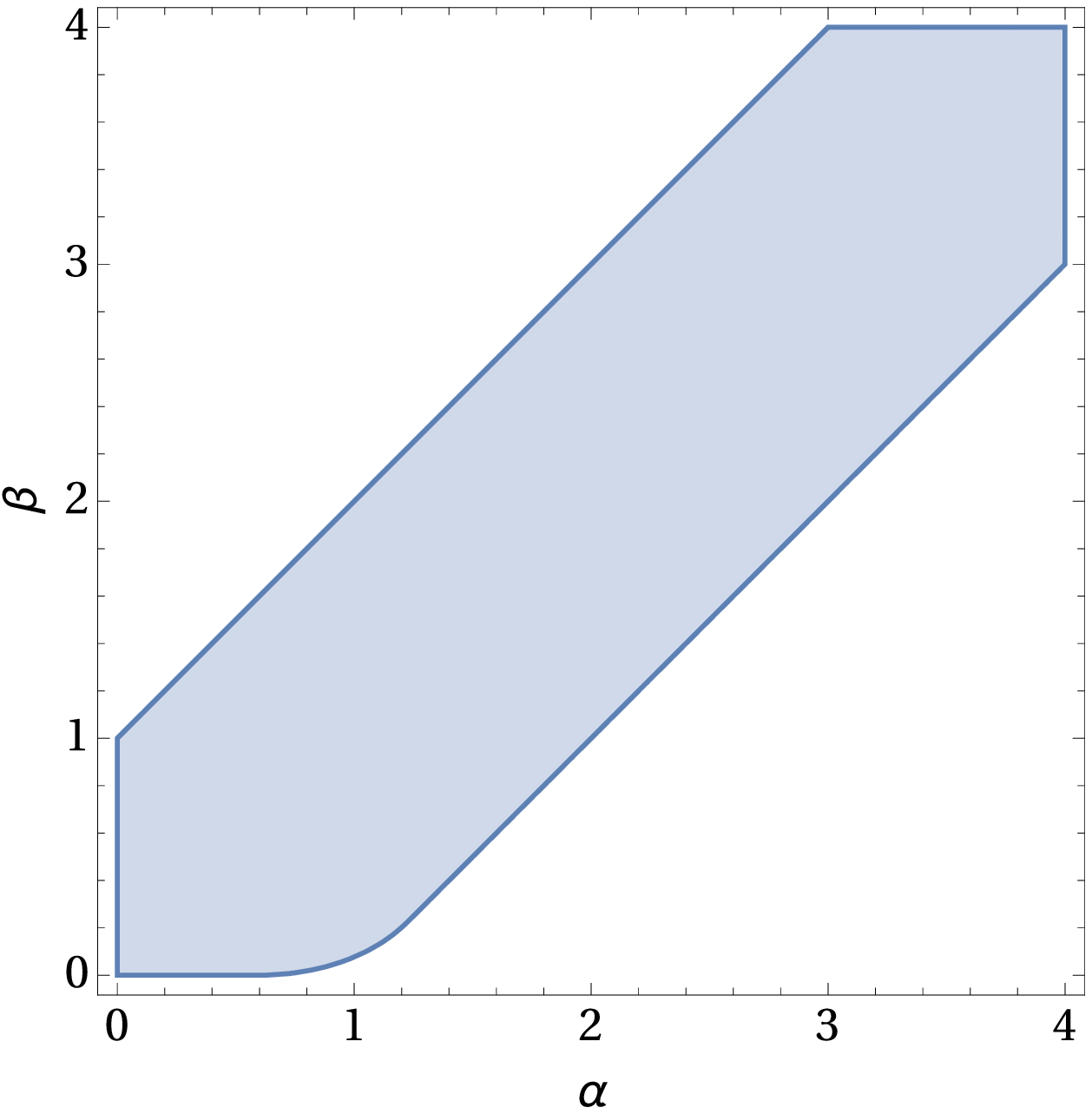}\hskip3mm
\includegraphics[width=65mm,height=55mm]{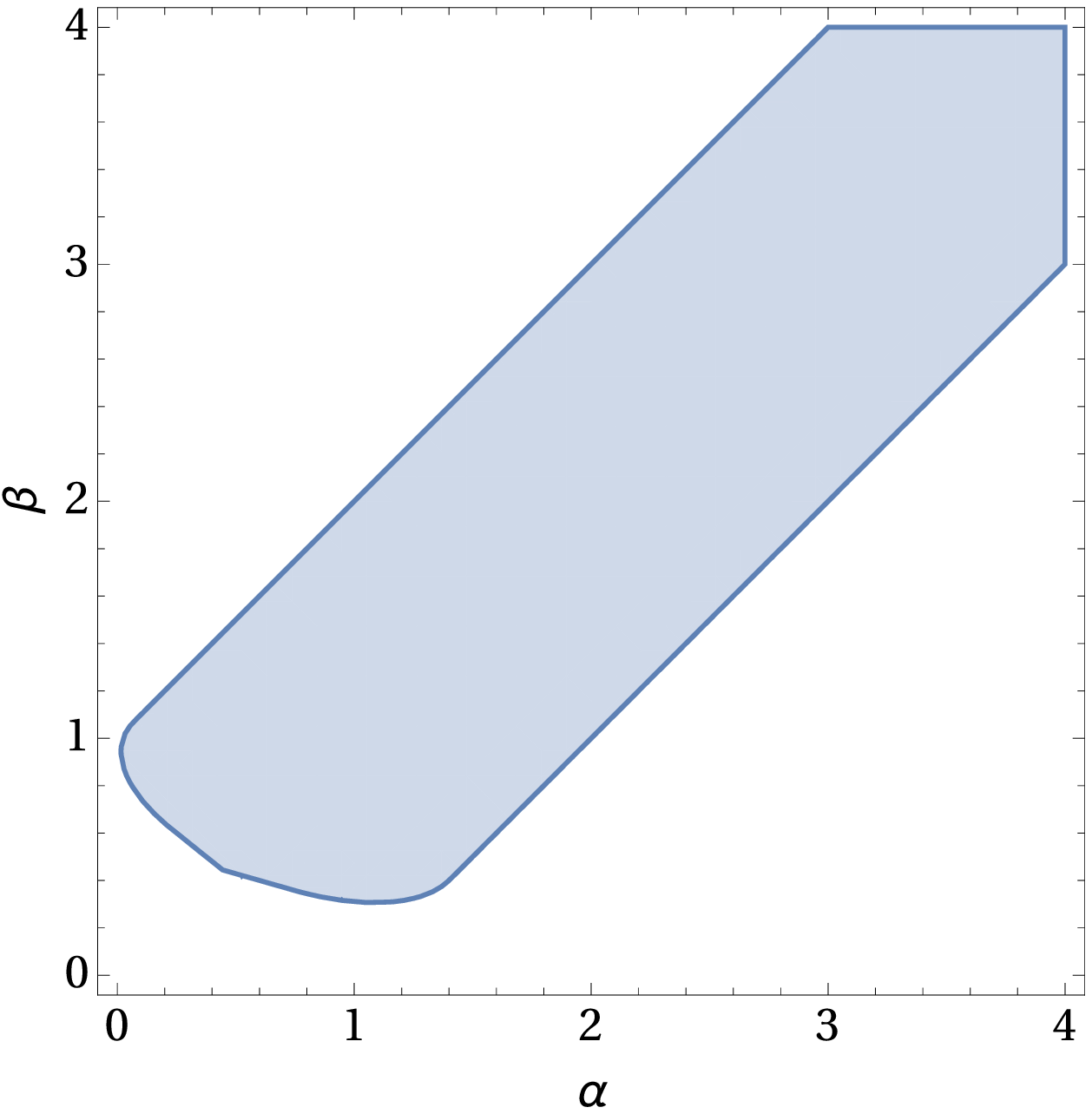} 
\includegraphics[width=65mm,height=55mm]{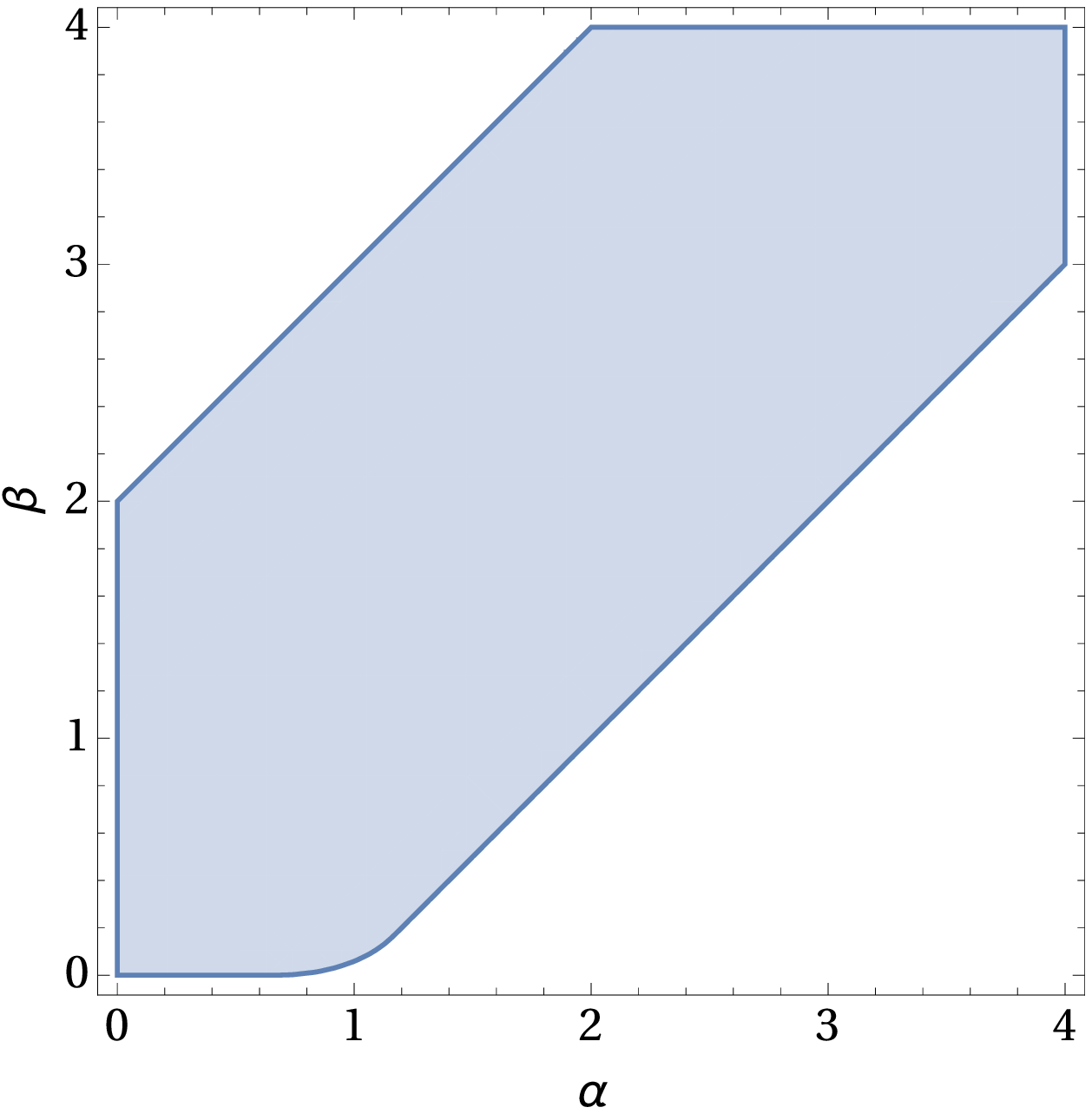} \hskip3mm
\includegraphics[width=65mm,height=55mm]{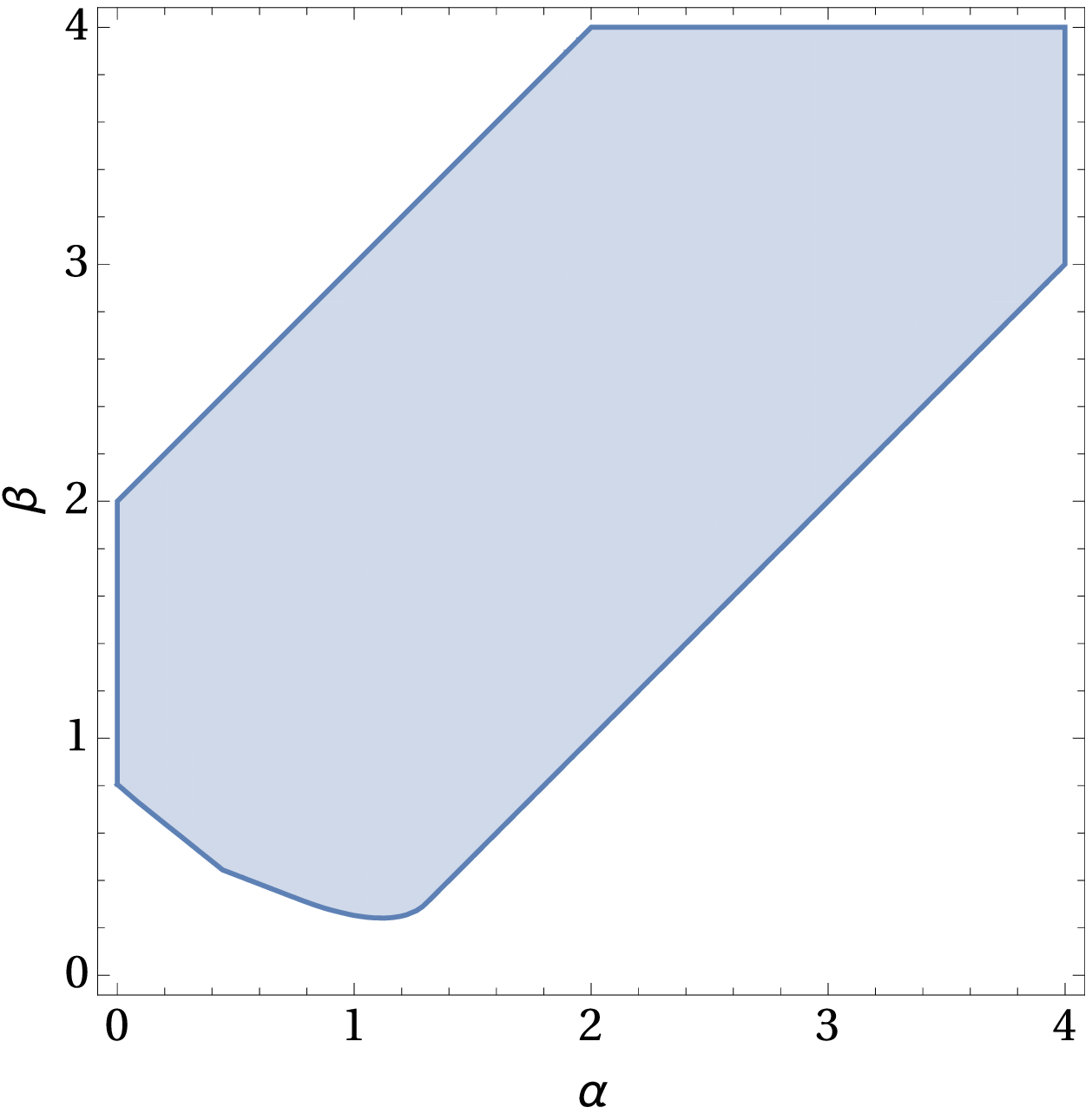} 
\includegraphics[width=65mm,height=55mm]{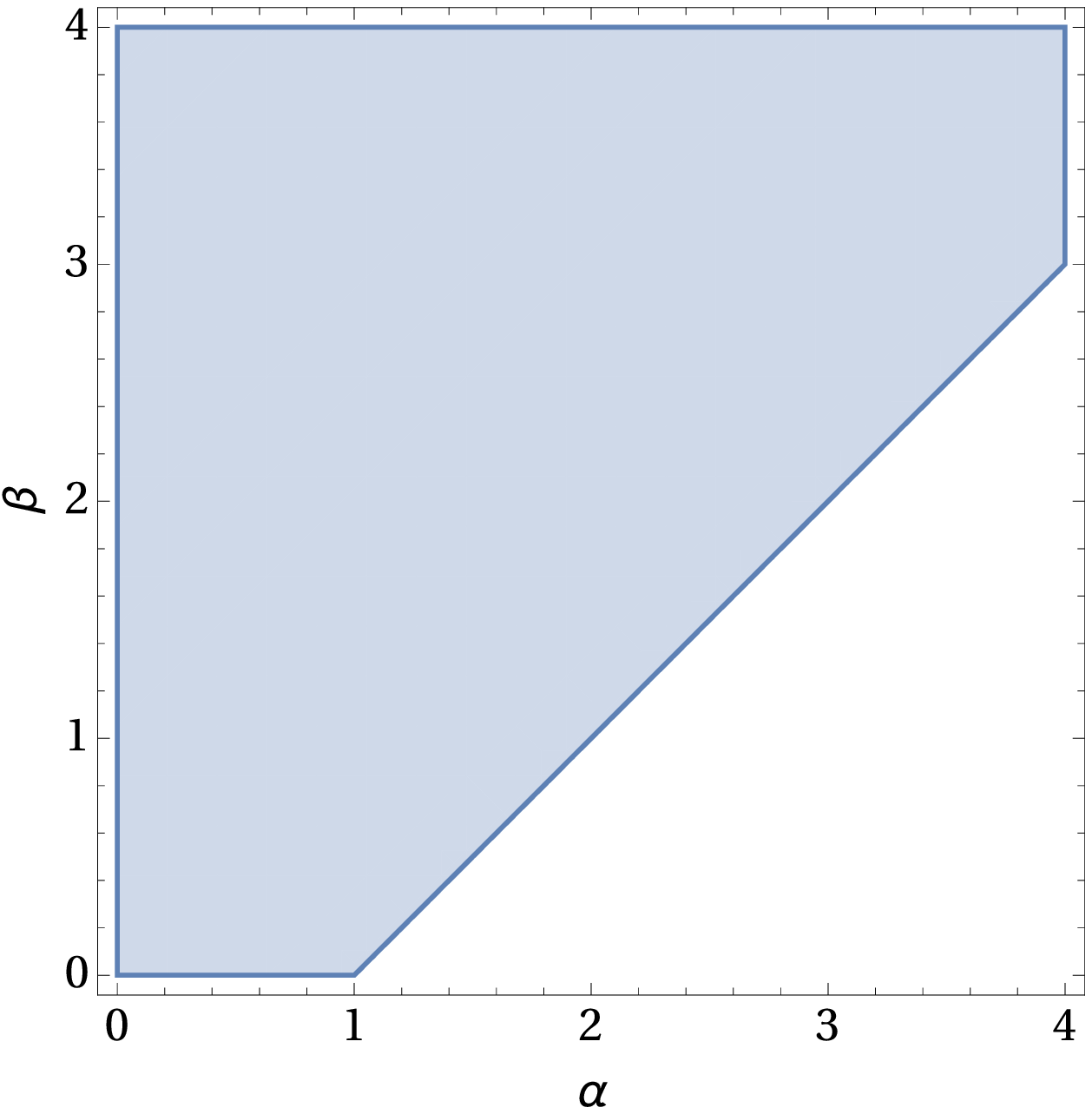} \hskip3mm
\includegraphics[width=65mm,height=55mm]{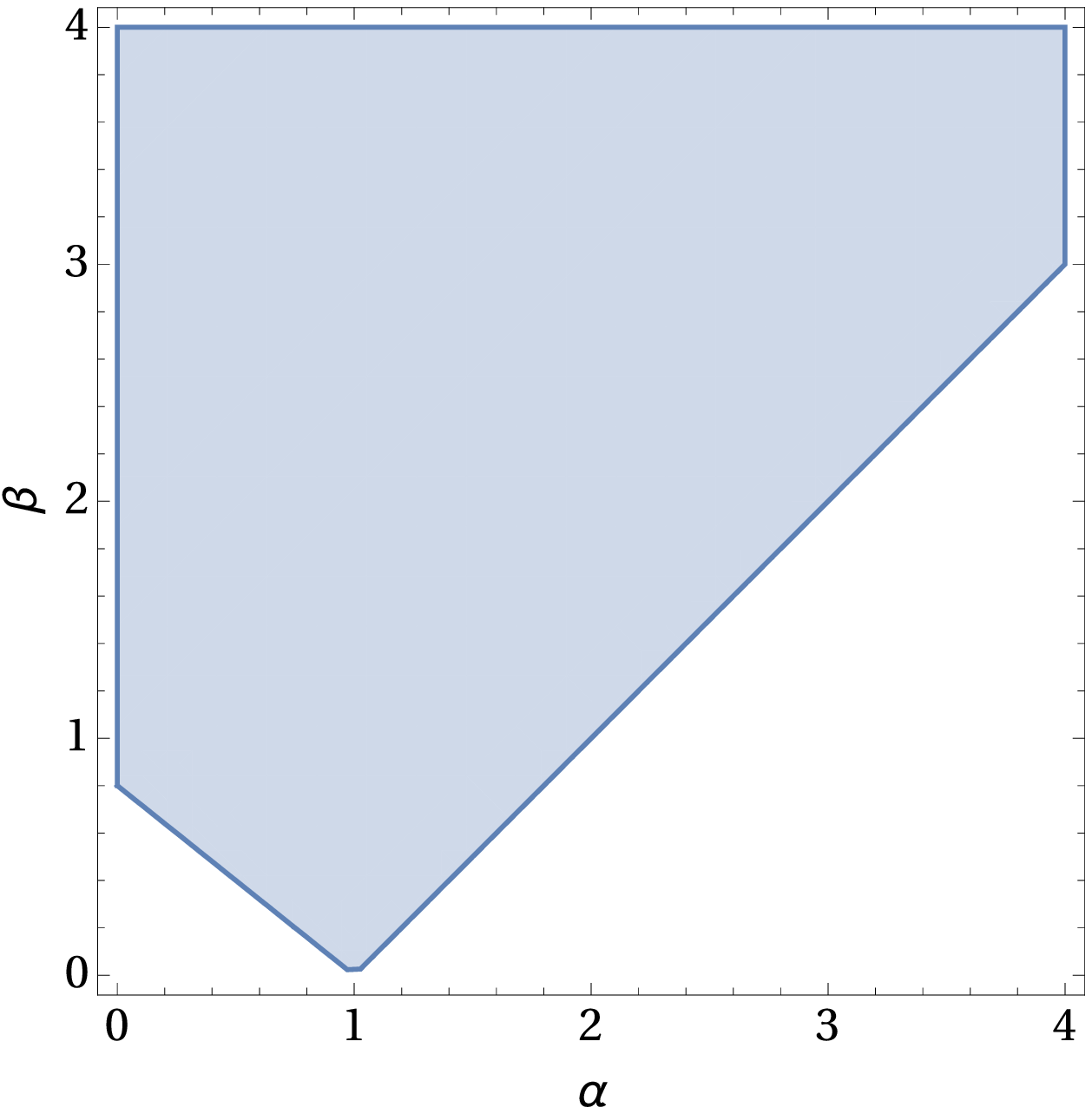} 
\caption{Set $R_0(d)$ of all $(\alpha,\beta)$ for which the zeroth-order entropy
is dissipating. Left column: $d=2$, right column: $d=10$.
Top row: explicit Euler scheme with $C_{\rm RK}=2$, 
middle row: implicit Euler scheme with $C_{\rm RK}=1$,
bottom row: Runge-Kutta scheme of order $p\ge 2$ with $C_{\rm RK}=0$.}
\label{fig.0th}
\end{figure}

\begin{proof}
Since $k\in\N$ is fixed, we set $u:=u^k$.
We choose the functions
$$
  \Gamma_1(u) = c_1\beta^2 u^{2\beta-\alpha-1}, \quad
	\Gamma_2(u) = c_2\beta^2 u^{2\beta-2\alpha-1}.
$$
It holds $h''(u)=u^{\alpha-1}$ and $\mu(u)=\beta u^{\beta-\alpha}$.
Then the coefficients in \eqref{de.ai} are as follows:
\begin{align*}
  a_1 &= \beta^2\big((C_{\rm RK}+1) + (1-\tfrac{1}{d})c_1\big)u^{2\beta-\alpha-1}, \\
  a_2 &= \beta^2\big((C_{\rm RK}+2)(\beta-\alpha)
	+ (1-\tfrac{1}{d})(2\beta-\alpha-1)c_1
	- (\tfrac{2}{d}+1)c_2\big)u^{2\beta-2\alpha-1}, \\
	a_3 &= \beta^2\big((\beta-\alpha)^2 - (2\beta-2\alpha-1)c_2\big)
	u^{2\beta-3\alpha-2}, \\
	a_4 &= -\beta^2(d-1)\big((2\beta-\alpha-1)c_1+2c_2\big)u^{2\beta-2\alpha-1}, \\
	a_5 &= -\beta^2 c_1 u^{2\beta-\alpha-1}, \quad
	a_6 = -\beta^2 d(d-1) c_1 u^{2\beta-\alpha-1}.
\end{align*}
Introducing the variables $\eta_j = \xi_j/u^{\alpha}$ for
$j\in\{G,L,R,S\}$, we can write \eqref{de.poly} as
\begin{align*}
  & G''(0)\le G''(0)+J_1+J_2 = -\beta^2\int_\Omega u^{2\beta+\alpha-1}Q(\eta)dx, \\
	&\mbox{where }
	Q(\eta) = b_1\eta_L^2 + b_2\eta_L\eta_G^2 + b_3\eta_G^4 
	+ b_4\eta_S\eta_G^2 + b_5\eta_R^2	+ b_6\eta_S^2
\end{align*}
with coefficients
\begin{align*}
  b_1 &= (C_{\rm RK}+1)+(1-\tfrac{1}{d})c_1, \\
	b_2 &= (C_{\rm RK}+2)(\beta-\alpha) + (1-\tfrac{1}{d})(2\beta-\alpha-1)c_1
	- (\tfrac{2}{d}+1)c_2, \\
	b_3 &= (\beta-\alpha)^2 - (2\beta-2\alpha-1)c_2, \\
	b_4 &= -(d-1)\big((2\beta-\alpha-1)c_1+2c_2\big), \\
	b_5 &= -c_1, \quad b_6 = -d(d-1) c_1.
\end{align*}

We need to determine all $(\alpha,\beta)$ 
such that there exist $c_1\le 0$, $c_2\in\R$ such that $Q(\eta)\ge 0$
for all $\eta=(\eta_G,\eta_L,\eta_R,\eta_S)$.
Without loss of generality, we exclude the cases $b_1=b_2=0$ and
$b_4=b_6=0$ since they lead to parameters $(\alpha,\beta)$ included in the
region calculated below. Thus, let $b_1>0$ and $b_6>0$. These inequalities
give the bound $-(C_{\rm RK}+1)/(1-1/d)<c_1<0$. Thus, we may introduce the parameter
$\lambda\in(0,1)$ by setting $c_1=-\lambda(C_{\rm RK}+1)/(1-1/d)$.
The polynomial $Q(\eta)$ can be rewritten as
\begin{align*}
  Q(\eta) &= b_1\left(\eta_L+\frac{b_2}{2b_1}\eta_G^2\right)^2
	+ b_6\left(\eta_S+\frac{b_4}{2b_6}\eta_G^2\right)^2 
	+ b_5\eta_R^2 + \eta_G^4\left(b_3-\frac{b_2^2}{4b_1}-\frac{b_4^2}{4b_6}\right) \\
  &\ge\eta_G^4\left(b_3-\frac{b_4^2}{4b_6}-\frac{b_2^2}{4b_1}\right)
	=: \frac{\eta_G^4 (C_{\rm RK}+1)}{4b_1b_6}R(c_2;\lambda,\alpha,\beta),
\end{align*}
where $R(c_2;\lambda,\alpha,\beta)$ is a quadratic polynomial in $c_2$ with
the nonpositive leading term $-d^2(4-3\lambda) + 4(2-3\lambda)d - 4$. The
polynomial $R(c_2;\lambda,\alpha,\beta)$ is nonnegative for some $c_2$ if
and only if its discriminant $4d^2\lambda(1-\lambda)S(\lambda;\alpha,\beta)$
is nonnegative. Here, $S(\lambda;\alpha,\beta)$ is a quadratic polynomial
in $\lambda$. In order to derive the conditions on $(\alpha,\beta)$ such that 
$S(\lambda;\alpha,\beta)\ge 0$ for some $\lambda\in(0,1)$, we employ
the computer-algebra system {\tt Mathematica}. The result of the command
\begin{quote}\begin{verbatim}
Resolve[Exists[LAMBDA, S[LAMBDA] >= 0 && LAMBDA > 0 
&& LAMBDA < 1], Reals]	
\end{verbatim}\end{quote}
gives all $(\alpha,\beta)\in\R^2$ such that there exist $c_1\le 0$, $c_2\in\R$ 
such that $Q(\eta)\ge 0$. The interior of this region equals the set $R_0(d)$, 
defined in the statement of the theorem. This shows that $G''(0)\le 0$ for
all $(\alpha,\beta)\in R_0(d)$.

If $G''(0)=0$, the nonnegative polynomial $Q$ has to vanish. 
In particular, $b_1\eta_L^2=0$. If $\eta_L=0$ in $\Omega$, the boundary
conditions imply that $u$ is constant, which contradicts our assumption
that $u$ is not the steady state. Thus $b_1=0$. Similarly, $b_2=b_3=b_4=0$.
This gives a system of four inhomogeneous linear equations for $(c_1,c_2)$ which is
unsolvable. Consequently, $G''(0)<0$.

The set $R_0(d)$ is nonempty since, e.g., $(1,1)\in R_0(d)$. Indeed, choosing
$c_1=-1$ and $c_2=0$, we find that $Q(\eta)=(C_{\rm RK}+\frac{1}{d})\eta_L^2
+ \eta_R^2 + d(d-1)\eta_S^2\ge 0$.

In one space dimension, the situation simplifies since the Laplacian coincides
with the Hessian and thus, the integration-by-parts formula \eqref{de.J1} 
is not needed. Then (see \eqref{de.J2})
$$
  G''(0) = G''(0)+J_1
	= -\beta^2\int_\Omega u^{2\beta+\alpha-1}\big(a_1\xi_L^2 + a_2\xi_L\xi_G^2
	+ a_3\xi_G^4\big)dx,
$$
where
$$
  a_1 = C_{\rm RK}+1, \quad 
	a_2 = (C_{\rm RK}+2)(\beta-\alpha) - 3c_2, \quad
	a_3 = (\beta-\alpha)^2 - (2\beta-2\alpha-1)c_2.
$$
The polynomial $P(\xi)=\xi_G^4(a_1 y^2 + a_2 y + a_3)$ with $y=\xi_L/\xi_G^2$
is nonnegative if and only if $a_1\ge 0$ and 
$4a_1a_3-a_2^2\ge 0$, which is equivalent to
\begin{equation}\label{pm.aux}
  -9c_2^2 + 2\big((C_{\rm RK}-2)(\alpha-\beta)+2(C_{\rm RK}+1)\big)c_2 
	- C_{\rm RK}^2(\alpha-\beta)^2 \ge 0.
\end{equation}
This inequality has a solution $c_2\in\R$ if and only if the quadratic 
polynomial has real roots, i.e.\ if its discriminant is nonnegative,
\begin{align*}
	0 &\le \big((C_{\rm RK}-2)(\alpha-\beta)+2(C_{\rm RK}+1)\big)^2
	- 9C_{\rm RK}^2(\alpha-\beta)^2 \\
	&= 4(C_{\rm RK}+1)\left(-(2C_{\rm RK}-1)(\alpha-\beta)^2
	+ (C_{\rm RK}-2)(\alpha-\beta) + (C_{\rm RK}+1)\right). 
\end{align*}
The polynomial $-(2C_{\rm RK}-1)z^2 + (C_{\rm RK}-2)z + (C_{\rm RK}+1)$ 
with $z=\alpha-\beta$ is always nonnegative if $C_{\rm RK}=0$ (implicit Euler).
For $C_{\rm RK}=1$ and $C_{\rm RK}=2$, this property holds if and only if
$-(C_{\rm RK}+1)/(2C_{\rm RK}-1)\le\alpha-\beta\le 1$.
This concludes the proof.
\end{proof}


\subsection{First-order entropies}

We consider the one-dimensional case and first-order entropies
with $f(u)=u^{\alpha/2}$, $\alpha>0$.

\begin{theorem}\label{thm.pm1}
Let $\Omega\subset\R$ be a bounded interval.
Let $(u^k)$ be a sequence of (smooth) solutions to the Runge-Kutta scheme
\eqref{1.rk} of order $p\ge 2$ for \eqref{pm.eq} in one space dimension.
Let the entropy be given by $F[u]=\int_\Omega (u^{\alpha/2})_x^2 dx$ with
$\alpha>0$, let $k\in\N$ be fixed,
and let $u^k$ be not the constant steady state of \eqref{pm.eq}.
There exists a nonempty region $R_1\in[0,\infty)^2$ and $\tau^k>0$ such that for all 
$(\alpha,\beta)\in R_1$, there is a constant $C_{\alpha,\beta}>0$ such that
for all $0<\tau\le\tau^k$,
$$
  F[u^k] + \tau C_{\alpha,\beta}\int_\Omega (u^k)^{\alpha+\beta-3}(u^k_{xx})^2 dx
	\le F[u^{k-1}], \quad k\in\N.
$$
\end{theorem}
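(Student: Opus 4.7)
The plan is to apply Theorem \ref{thm.ed2}. Since the Runge-Kutta scheme is of order $p\geq 2$, we have $C_{\rm RK}=1$, so it suffices to verify $I_1^k>0$ for $I_1^k$ defined in \eqref{1.I1}, and then to extract from $-G'(0)$ a lower bound of the form $C_{\alpha,\beta}\int_\Omega u^{\alpha+\beta-3}u_{xx}^2\,dx$. Setting $u:=u^k$, I would insert $f(u)=u^{\alpha/2}$ and $A[u]=-(u^\beta)_{xx}$ into \eqref{1.I1} and expand. The factor $DA[u](A[u])=\pa_{xx}(\beta u^{\beta-1}(u^\beta)_{xx})$ a priori involves a fourth derivative of $u$; one integration by parts, justified by the boundary condition $u_x=0$ on $\pa\Omega$ (which follows from $\na f(u)\cdot\nu=0$), shifts a $\pa_x$ onto $\Delta f(u)f'(u)$ and reduces the maximum derivative order to three. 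Expanding $|\pa_x(f'(u)A[u])|^2$, the integrand of $I_1^k$ then becomes a polynomial in $u,u_x,u_{xx},u_{xxx}$, homogeneous in the natural weighted-degree sense.

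Following the systematic integration-by-parts procedure of \cite{JuMa06}, as in the proofs of Theorems \ref{thm.de} and \ref{thm.pm}, I would then add to $I_1^k$ identities $\int_\Omega\pa_x(c_j u^{p_j}u_x^{q_j}u_{xx}^{r_j})\,dx=0$, with exponents $p_j,q_j,r_j$ chosen for homogeneity; the associated boundary terms vanish automatically because every summand carries a factor of $u_x$. A finite family of such identities suffices to eliminate every linear occurrence of $u_{xxx}$. Normalizing by $\eta_1:=u_x/u$ and $\eta_2:=u_{xx}/u$, what remains is
\[
  I_1^k = \int_\Omega u^{\alpha+\beta-1}Q(\eta_1,\eta_2)\,dx,
\]
where $Q$ is a polynomial of degree four in $(\eta_1,\eta_2)$ whose coefficients depend on $\alpha,\beta$ and the free constants $c_j$.

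The sign problem $Q(\eta_1,\eta_2)\geq 0$ for all $(\eta_1,\eta_2)\in\R^2$ is handled by viewing $Q$ as a quadratic polynomial in $\eta_2$: nonnegativity amounts to the nonnegativity of its leading coefficient together with the nonpositivity of its discriminant, both of which are polynomial inequalities in $\alpha,\beta,c_j$. Eliminating the $c_j$ by quantifier elimination, via \texttt{Mathematica}'s \texttt{Resolve} command exactly as in the proof of Theorem \ref{thm.pm}, yields $R_1$ as the interior of the feasible set. I would check nonemptiness at the single point $(\alpha,\beta)=(1,1)$, which corresponds to the Fisher information for the linear heat equation. The strict inequality $I_1^k>0$ (rather than $\geq 0$) follows, as in the earlier proofs, from the hypothesis that $u^k$ is not the constant steady state. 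The explicit dissipation $C_{\alpha,\beta}\int_\Omega u^{\alpha+\beta-3}u_{xx}^2\,dx$ is extracted from $-G'(0)=-\int_\Omega \Delta f(u)f'(u)A[u]\,dx$: the leading $u_{xx}^2$ term already has exactly the claimed power of $u$, while the cross contributions in $u_x^2 u_{xx}$ and $u_x^4$ can be absorbed into it, after one integration by parts, with a small loss in the constant.

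The main obstacle I anticipate is the combinatorial bulk of the polynomial algebra: the integration-by-parts ansatz has several free parameters and the coefficients of $Q$ depend on $\alpha,\beta$ and these parameters in a way that is opaque a priori, so computer algebra is essentially unavoidable. A secondary difficulty is that $R_1$, as drawn in Figure \ref{fig.1st}, does not seem to admit a clean closed-form description, so one can at best characterize it implicitly as the output of the quantifier-elimination step.
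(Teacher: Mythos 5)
Your overall architecture is right: apply Theorem \ref{thm.ed2}, establish separately that $-G'(0)$ controls $\int_\Omega u^{\alpha+\beta-3}u_{xx}^2\,dx$ (this part of your plan matches the paper, which uses one integration-by-parts identity and a discriminant condition to get $-2\le\alpha-2\beta\le 1$), and then prove $G''(0)<0$ by systematic integration by parts. But there is a genuine gap in the second half. You claim that all occurrences of $u_{xxx}$ can be eliminated, leaving a quartic polynomial $Q(\eta_1,\eta_2)$ in $\eta_1=u_x/u$, $\eta_2=u_{xx}/u$ only. That cannot work: the term $|\pa_x(f'(u)A[u])|^2$ in $I_1^k$ contains $(u_{xxx})^2$ (since $A[u]=-(u^\beta)_{xx}$ already carries two derivatives), and a \emph{quadratic} occurrence of the top derivative cannot be removed by integration by parts without introducing $u_{xxxx}$. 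Only the terms \emph{linear} in $u_{xxx}$ are negotiable, and even there the three available shift identities are not enough to kill simultaneously the $\xi_1^3\xi_3$, $\xi_1\xi_2\xi_3$ and $\xi_2^3$ monomials. Consequently the actual sign problem is for a weighted-homogeneous polynomial of degree six in the three variables $\xi_1=u_x/u$, $\xi_2=u_{xx}/u$, $\xi_3=u_{xxx}/u$ (with weight $u^{\alpha+2\beta-2}$, not $u^{\alpha+\beta-1}$ -- you have taken the weight of $G'(0)$ rather than of $G''(0)$). The paper normalizes by $x=\xi_2/\xi_1^2$, $y=\xi_3/\xi_1^3$ and must then decide nonnegativity of a full bivariate quadratic $A+Bx+Cy+Dx^2+Exy+Fy^2$; this requires the specific criterion of \cite{JuMi09} (leading $2\times 2$ block positive semidefinite \emph{plus} a determinant-type condition on the constant and linear coefficients), not the one-variable ``leading coefficient and discriminant'' test you propose. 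Your decision procedure is therefore set up for the wrong object, and the feasibility region it would output is not the $R_1$ of the theorem.

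Two smaller points. The boundary terms do not all vanish merely because ``every summand carries a factor of $u_x$'': the integration by parts that lowers $DA[u](A[u])$ from fourth to third order uses $(\beta u^{\beta-1}(u^\beta)_{xx})_x=0$ on $\pa\Omega$, i.e.\ the Neumann condition for $A[u]$ itself, which you should state. And nonemptiness of $R_1$ is not checked at $(1,1)$ in the paper; since $R_1$ is strictly smaller than the continuous region $-2\le\alpha-2\beta<1$, a candidate point must be verified against the actual (trivariate) positivity conditions, not against the zeroth-order analogue.
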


Figure \ref{fig.1st} illustrates the set $R_1$. The set of admissible
values $(\alpha,\beta)$ for the continuous equation is given by
$\{-2\le\alpha-2\beta<1\}$ (the borders of this set are depicted in  the figure
by the dashed lines). 

\begin{figure}[ht]
\includegraphics[width=85mm]{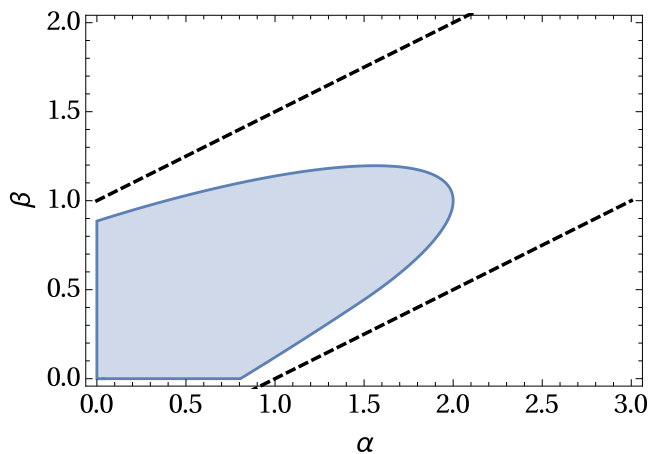}
\caption{Set of all $(\alpha,\beta)$ for which the discrete first-order entropy
for solutions to the one-dimensional porous-medium equation is dissipating.
The continuous first-order entropy is dissipated for $-2\le\alpha-2\beta<1$.
The borders of this set is indicated in the figure by dashed lines.}
\label{fig.1st}
\end{figure}

\begin{proof}
First, we compute $G'(0)$ according to Theorem \ref{thm.ed2}:
$$
  G'(0) = -\alpha\int_\Omega u^{\alpha/2-1}(u^{\alpha/2})_{xx}
	(u^\beta)_{xx} dx.
$$
We show that $G'(0)$ is nonpositive in a certain range of values
$(\alpha,\beta)$. We formulate $G'(0)$ as
$$
  G'(0) = -\frac{\alpha^2\beta}{4}\int_\Omega u^{\alpha+\beta-1}
	\big((\alpha-2)(\beta-1)\xi_1^4 + (\alpha+2\beta-4)\xi_1^2\xi_2 + 2\xi_2^2\big)dx,
$$
where $\xi_1=u_x/u$, $\xi_2=u_{xx}/u$.
We employ the integration-by-parts formula
$$
  0 = \int_\Omega(u^{\alpha+\beta-4}u_x^3)_x dx
	= \int_\Omega u^{\alpha+\beta-1}\big((\alpha+\beta-4)\xi_1^4 + 3\xi_1^2\xi_2\big)dx
	=: J.
$$
Therefore,
$$
  G'(0) = G'(0)-\frac{\alpha^2\beta}{4}cJ
	= -\frac{\alpha^2\beta}{4}\int_\Omega u^{\alpha+\beta-1}P(\xi)dx,
$$
where
$$
  P(\xi) = \big((\alpha-2)(\beta-1)+(\alpha+\beta-4)c\big)\xi_1^4  
	+ \big(\alpha+2\beta-4+3c\big)\xi_1^2\xi_2 + 2\xi_2^2.
$$
This polynomial is nonnegative if and only if 
$$
  8\big((\alpha-2)(\beta-1)+(\alpha+\beta-4)c\big) 
	- (\alpha+2\beta-4+3c)^2\ge 0,
$$
which is equivalent to
$$
  g(c) := -9c^2 + 2(\alpha-2\beta-4)c - (\alpha-2\beta)^2 \ge 0.
$$
The maximizing value $c^*=(\alpha-2\beta-4)/9$, obtained
from $g'(c)=0$, yields
$$
  g(c^*) = -\frac89(\alpha-2\beta-1)(\alpha-2\beta+2) \ge 0
$$
and consequently $G'(0)\le 0$ if $-2\le\alpha-2\beta\le 1$. 
This condition is the same as in \cite[Theorem 13]{CJS15} for the
continuous equation.

Next, we turn to the proof of $G''(0)<0$. The proof of Theorem \ref{thm.ed2}
shows that
\begin{align*}
	G''(0) &= -\frac{\alpha}{2}\int_\Omega\bigg(\frac{\alpha}{2}
	\big(u^{\alpha/2-1}(u^\beta)_{xx}\big)_x^2 - \bigg(\frac{\alpha}{2}-1\bigg)
	u^{\alpha/2-2}(u^{\alpha/2})_{xx}(u^\beta)_{xx}^2 \\
	&\phantom{xx}{}- \beta C_{\rm RK}u^{\alpha/2-1}(u^{\alpha/2})_{xx}
	\big(u^{\beta-1}(u^\beta)_{xx}\big)_{xx}\bigg)dx.
\end{align*}
We integrate by parts in the last term and use
$(\beta u^{\beta-1}(u^\beta)_{xx})_x=0$ on $\pa\Omega$:
\begin{align*}
  G''(0) &= -\frac18\alpha^2\beta^2\int_\Omega u^{\alpha+2\beta-2} \\
	&\phantom{xx}{}\times
	\big(a_1\xi_1^6 + a_2\xi_1^4\xi_2 + a_3\xi_1^3\xi_3 + a_4\xi_1^2\xi_2^2
	+ a_5\xi_1\xi_2\xi_3 + a_6\xi_2^3 + a_7\xi_3^2\big)dx,
\end{align*}
where $\xi_1=u_x/u$, $\xi_2=u_{xx}/u$, $\xi_3=u_{xxx}/u$, and
\begin{align*}
  a_1 &= (\beta-1)\big(2C_{\rm RK}\alpha^2\beta - 3C_{\rm RK}\alpha^2
	+ 2\alpha\beta^2-2(5C_{\rm RK}+3)\alpha\beta + (15C_{\rm RK}+4)\alpha \\
	&\phantom{xx}{}+ 2\beta^3 - 14\beta^2 + 4(3C_{\rm RK}+7)\beta 
	- 2(9C_{\rm RK} + 8)\big), \\
	a_2 &= (\beta-1)\big(4C_{\rm RK}\alpha^2 + (8C_{\rm RK}+7)\alpha\beta
	- (32C_{\rm RK}+9)\alpha + 12\beta^2 - 2(8C_{\rm RK}+25)\beta \\
	&\phantom{xx}{}+ 6(8C_{\rm RK} + 7)\big), \\
	a_3 &= C_{\rm RK}\alpha^2 + 2\alpha\beta - (5C_{\rm RK}+2)\alpha
	+ 4(C_{\rm RK}+1)\beta^2 - 2(5C_{\rm RK}+8)\beta + 12(C_{\rm RK} + 1), \\
	a_4 &= 2(\beta-1)\big(2(4C_{\rm RK}+1)\alpha + 9\beta - (16C_{\rm RK} + 13)\big), \\
	a_5 &= 2(2C_{\rm RK}+1)\alpha + 4(2C_{\rm RK}+3)\beta - 16(C_{\rm RK}+1), \\
	a_6 &= 2-\alpha, \quad a_7 = 2(C_{\rm RK}+1).
\end{align*}
We employ three integration-by-parts formulas:
\begin{align*}
  0 &= \int_\Omega\big(u^{\alpha+2\beta-5}u_{xx}^2 u_x\big)_x dx 
	= \int_\Omega u^{\alpha+2\beta-2}\big((\alpha+2\beta-5)\xi_1^2\xi_2^2
	+ 2\xi_1\xi_2\xi_3 + \xi_2^3\big)dx =: J_1, \\
	0 &= \int_\Omega\big(u^{\alpha+2\beta-6}u_{xx}u_x^3\big)_x dx
	= \int_\Omega u^{\alpha+2\beta-2}\big((\alpha+2\beta-6)\xi_1^4\xi_2
	+ \xi_1^3\xi_3 + 3\xi_1^2\xi_2^2\big)dx =: J_2, \\
	0 &= \int_\Omega\big(u^{\alpha+2\beta-7}u_x^5\big)_x dx
	= \int_\Omega u^{\alpha+2\beta-2}\big((\alpha+2\beta-7)\xi_1^6
	+ 5\xi_1^4\xi_2\big)dx =: J_3.
\end{align*}
Then
\begin{align*}
  & G''(0) = G''(0)-\frac18\alpha^2\beta^2(c_1J_1+c_2J_2+c_3J_3) 
	= -\frac18\alpha^2\beta^2\int_\Omega u^{\alpha+2\beta-2}P(\xi)dx, \\
  &\mbox{where }P(\xi) = b_1\xi_1^6 + b_2\xi_1^4\xi_2	+ b_3\xi_1^3\xi_3 
	+ b_4\xi_1^2\xi_2^2 + b_5\xi_1\xi_2\xi_3 + b_6\xi_2^3 + b_7\xi_3^2,
\end{align*}
and the coefficients are given by
\begin{align*}
  b_1 &= a_1 + (\alpha+2\beta-7)c_3,  &
	b_2 &= a_2 + (\alpha+2\beta-6)c_2 + 5c_3, \\
	b_3 &= a_3 + c_2, &	b_4 &= a_4 + (\alpha+2\beta-5)c_1 + 3c_2, \\
	b_5 &= a_5 + 2c_1,  & b_6 &= a_6 + c_1, \\
	b_7 &= a_7. 
\end{align*}
Choosing $c_1=-a_6$, we eliminate the cubic term $\xi_2^3$. 
Furthermore, setting, $x=\xi_2/\xi_1^2$ and $y=\xi_3/\xi_1^3$, we can write
the polynomial $P$ as a quadratic polynomial in $(x,y)$:
$$
  Q(x,y)=\xi_1^6 P(\xi)
	= b_1 + b_2 x + b_3 y + b_4 x^2 + b_5 xy + b_7 y^2.
$$

The following lemma is a consequence of the proof of Lemma 2.2 in \cite{JuMi09}.

\begin{lemma}
The polynomial $p(x,y) = A + B x + C y + D x^2 + E xy + F y^2$ with $F>0$
is nonnegative for all $(x,y)\in\R^2$ if and only if 
\begin{align*}
  &\text{\rm (i)}\phantom{.}\quad 4DF-E^2>0 \quad\mbox{and}\quad
	A(4DF-E^2) - B^2F - C^2D + BCE \ge 0, \quad\mbox{or} \\
  &\text{\rm (ii)}\quad 4DF-E^2=0\quad\mbox{and}\quad 2BF-CE=0\quad\mbox{and}\quad 
	4AF-C^2\ge 0.
\end{align*}
\end{lemma}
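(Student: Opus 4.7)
The plan is to exploit $F>0$ to complete the square in the variable $y$ and thereby reduce the two-variable nonnegativity problem to a one-variable one. Writing
$$
p(x,y) = F\left(y + \frac{Ex+C}{2F}\right)^{2} + q(x),
$$
with
$$
q(x) = \frac{4DF-E^{2}}{4F}\,x^{2} + \frac{2BF-CE}{2F}\,x + \frac{4AF-C^{2}}{4F},
$$
I would first observe that for any fixed $x\in\R$, the infimum over $y\in\R$ of $p(x,y)$ equals $q(x)$ (attained at $y=-(Ex+C)/(2F)$). Hence $p\ge 0$ on $\R^{2}$ if and only if $q\ge 0$ on $\R$.

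Next I would split into cases according to the sign of the leading coefficient of $q$, which has the sign of $4DF-E^{2}$ since $F>0$. If $4DF-E^{2}<0$, then $q(x)\to-\infty$ as $|x|\to\infty$, so $q$ cannot be nonnegative and this case yields no admissible $p$. If $4DF-E^{2}>0$, then $q$ is a strictly convex quadratic and $q\ge 0$ is equivalent to the discriminant being nonpositive, i.e.
$$
\left(\tfrac{2BF-CE}{2F}\right)^{2} \le 4\cdot\tfrac{4DF-E^{2}}{4F}\cdot\tfrac{4AF-C^{2}}{4F}.
$$
Multiplying through by $4F^{2}>0$ and expanding gives
$$
(2BF-CE)^{2} \le (4DF-E^{2})(4AF-C^{2}),
$$
and a direct expansion reveals that both sides share the term $C^{2}E^{2}$; after cancellation and dividing by $4F$, the inequality simplifies to exactly
$$
A(4DF-E^{2}) - B^{2}F - C^{2}D + BCE \ge 0,
$$
which is condition (i). Finally, if $4DF-E^{2}=0$, then $q$ is affine, and $q\ge 0$ on $\R$ forces both the slope $(2BF-CE)/(2F)$ and the constant term $(4AF-C^{2})/(4F)$ to satisfy $2BF-CE=0$ and $4AF-C^{2}\ge 0$, which is condition (ii).

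The argument is essentially a sequence of elementary reductions; no genuinely difficult step appears. The only point requiring care is the algebraic simplification in case (i), where one must verify that the discriminant inequality, after multiplication by $4F^{2}$, collapses to the compact form stated in the lemma rather than leaving spurious cross terms; this is the routine but slightly bookkeeping-heavy check involving the cancellation of $C^{2}E^{2}$ between the two sides.
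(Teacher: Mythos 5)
Your proof is correct and complete. Note that the paper itself offers no argument for this lemma: it is stated only as ``a consequence of the proof of Lemma 2.2 in \cite{JuMi09}'', so your completion-of-the-square reduction is a genuinely self-contained replacement for that citation rather than a restatement of the paper's reasoning. Every step checks out: since $F>0$, the pointwise minimum of $p(x,\cdot)$ is indeed $q(x)$, so $p\ge 0$ on $\R^2$ iff $q\ge 0$ on $\R$; the sign trichotomy on the leading coefficient $(4DF-E^2)/(4F)$ is exhaustive, with the negative case correctly excluded; and the algebra in the positive case works, since
$(2BF-CE)^2-(4DF-E^2)(4AF-C^2)=4F\bigl(B^2F-BCE+C^2D+AE^2-4ADF\bigr)$,
so after dividing by $4F>0$ the discriminant condition collapses to exactly $A(4DF-E^2)-B^2F-C^2D+BCE\ge 0$ with the $C^2E^2$ terms cancelling as you claim. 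The degenerate case $4DF-E^2=0$ correctly forces the affine $q$ to have zero slope and nonnegative constant term, giving (ii). What your approach buys is transparency and independence from the external reference; the only cosmetic improvement would be to state explicitly that the ``only if'' direction of (i)/(ii) follows because $q\ge0$ forces $4DF-E^2\ge 0$, which you do implicitly by eliminating the negative case first.
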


Note that in case $4DF-E^2=0$ and $E\neq 0$, we may replace $2BF-CE=0$ 
by the condition $2BEF = CE^2 = 4CDF$ or (since $F>0$) $BE=2CD$.

The first inequality in case (i),
\begin{align*}
  0 &< 4b_4b_7 - b_5^2
	= -(C_{\rm RK}+1)(2C_{\rm RK}+1)\alpha^2 
	+ (2C_{\rm RK}+2)(4C_{\rm RK}-3)\alpha\beta
	+ (9C_{\rm RK}+9)\alpha \\
	&\phantom{xx}{}- 2C_{\rm RK}(4C_{\rm RK}+3)\beta^2
	+ (8C_{\rm RK}+12)\beta + (3C_{\rm RK}+3)c_2 - (12C_{\rm RK} + 14),
\end{align*}
is linear in $c_2$ and provides a lower bound for $c_2$:
\begin{align*}
  c_2 &> \frac{1}{3(C_{\rm RK}+1)}\Big((C_{\rm RK}+1)(2C_{\rm RK}+1)\alpha^2 
	- (2C_{\rm RK}+2)(4C_{\rm RK}-3)\alpha\beta
	- (9C_{\rm RK}+9)\alpha \\
	&\phantom{xx}{}+ 2C_{\rm RK}(4C_{\rm RK}+3)\beta^2
	- (8C_{\rm RK}+12)\beta + (12C_{\rm RK} + 14)\Big) =: c_2^*.
\end{align*}
The second inequality in case (i) becomes
$$
  0\le b_1(4b_4b_7-b_5^2) - b_2^2b_7 - b_3^2b_4 + b_2b_3b_5
  = -50(C_{\rm RK}+1)c_3^2 + p_1(\alpha,\beta,c_2)c_3 + p_2(\alpha,\beta,c_2),
$$
where $p_1$ and $p_2$ are some polynomials in $\alpha$, $\beta$, and $c_2$.
This quadratic expression in $c_3$ is nonnegative if and only if its discriminant
is nonnegative,
\begin{align*}
  0 &\le -200(C_{\rm RK}+1)p_2(\alpha,\beta,c_2) - p_1(\alpha,\beta,c_2)^2 \\
	&= -8\big(4b_4b_7 - b_5^2\big)
	\big(25c_2^2 + p_3(\alpha,\beta)c_2	+ p_4(\alpha,\beta)\big),
\end{align*}
where $p_3(\alpha,\beta)$ and $p_4(\alpha,\beta)$ are
some polynomials in $\alpha$ and $\beta$. The factor $4b_4b_7 - b_5^2$ is positive,
so we have to ensure that 
$R_{\alpha,\beta}(c_2)=25c_2^2 + p_3(\alpha,\beta)c_2	+ p_4(\alpha,\beta)\le 0$
for some $c_2>c_2^*$. 
Therefore we must ensure that the rightmost root of $R_{\alpha,\beta}(c_2)$ is 
larger or equal than the lower bound for $c_2$, i.e., 
$-p_3(\alpha,\beta)+\sqrt{p_3^2(\alpha,\beta)-100p_4(\alpha,\beta)}\ge 50c^*_2$.
For $C_{\rm RK}=1$, 
the values $(\alpha,\beta)$ for which there exists $c_2>c_2^*$ such that
$R_{\alpha,\beta}(c_2)\le 0$ is depicted in Figure \ref{fig.1st}.
In case (ii), we may immediately calculate $c_2$ and $c_3$ but this results in
a region which is already contained in the first one.
This shows that $G''(0)\le 0$.

If $G''(0)=0$, the polynomial $Q$ vanishes. Thus, either $u_x/u=\xi_1=0$ or
$P(\xi)=0$ in $\Omega$. The first case is impossible since $u$ is not constant
in $\Omega$. As $b_7=a_7=2(C_{\rm RK}+1)>0$, the second case $P(\xi)=0$ implies
that $\xi_3=0$. Hence, $u$ is a quadratic polynomial. In view of the boundary
conditions, $u$ must be constant, but this contradicts our assumption. 
Hence, $G''(0)<0$.
\end{proof}


\section{Linear diffusion system}\label{sec.sys}

We consider the following linear diffusion system:
\begin{equation}\label{sys.eq}
  \pa_t u_1 - \rho_1\Delta u_1 = \mu(u_2-u_1), \quad
  \pa_t u_2 - \rho_2\Delta u_2 = \mu(u_1-u_2),
\end{equation}
with initial and homogeneous Neumann boundary conditions,
$\rho_1$, $\rho_2$, $\mu>0$, and the entropy
\begin{equation}\label{sys.H}
  H[u] = \int_\Omega h(u)dx 
	= \int_\Omega\sum_{i=1}^2 u_i(\log u_i-1)dx,
\end{equation}
where $u=(u_1,u_2)$.
If the initial data is nonnegative, the maximum principle
shows that the solutions to \eqref{sys.eq} are nonnegative too.

\begin{theorem}\label{thm.sys}
Let $(u^k)$ be a sequence of (smooth) nonnegative solutions to the Runge-Kutta scheme
\eqref{1.rk} for \eqref{sys.eq} with $C_{\rm RK}=1$ and $\rho:=\rho_1=\rho_2$.
Let the entropy $H$ be given by \eqref{sys.H}. Let $k\in\N$ be fixed and let
$u^k$ be not the steady state of \eqref{1.rk}. 
Then there exists $\tau^k>0$ such that for all $0<\tau<\tau^k$,
$$
  H[u^k] + \tau\int_\Omega\bigg(\rho\sum_{i=1}^2\frac{|\na u_i^k|^2}{u_i^k}
	+ \mu(\log u_1^k-\log u_2^k)(u_1^k-u_2^k)\bigg)dx \le H[u^{k-1}].
$$
\end{theorem}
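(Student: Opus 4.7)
The plan is to apply Theorem~\ref{thm.ed} with $C_{\rm RK}=1$, which reduces the task to (a) identifying $G'(0)$ with the claimed entropy production and (b) verifying $I_0^k>0$. Setting $u:=u^k$, I would decompose $A[u]=D[u]+R[u]$ with $D[u]=(-\rho\Delta u_1,-\rho\Delta u_2)$ and $R[u]=(\mu(u_1-u_2),\mu(u_2-u_1))$. Both operators are linear, so $DA[u](A[u])=A[A[u]]$. The computation of $G'(0)=-\int_\Omega h'(u)A[u]\,dx$ requires only one integration by parts on the diffusion terms (using $\na u_i^k\cdot\nu=0$) and immediately produces the integrand $\rho\sum_i|\na u_i|^2/u_i + \mu(\log u_1-\log u_2)(u_1-u_2)\ge 0$ appearing in the statement.

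The heart of the proof will be $I_0^k>0$, and here the hypothesis $\rho_1=\rho_2=\rho$ is decisive: $D$ becomes a scalar Laplacian acting componentwise, hence $DR=RD$. Expanding $A[A[u]]=D^2u+2DRu+R^2u$ and $(A[u])^2=(Du)^2+2(Du)(Ru)+(Ru)^2$, I would split
$$
  I_0^k = I_{DD} + I_{DR} + I_{RR}
$$
into pure-diffusion, mixed, and pure-reaction contributions and prove nonnegativity of each one separately. For $I_{RR}$, the identity $R^2[u]=(2\mu^2(u_1-u_2),-2\mu^2(u_1-u_2))$ yields the pointwise nonnegative integrand $2\mu^2(\log u_1-\log u_2)(u_1-u_2) + \mu^2(u_1-u_2)^2(u_1+u_2)/(u_1u_2)$. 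For $I_{DR}$ I expect one integration by parts in the Laplacian, combined with an elementary algebraic rearrangement, to collapse the mixed term to the perfect square $2\rho\mu\int_\Omega(u_1+u_2)|\na\log u_1-\na\log u_2|^2\,dx\ge 0$. For $I_{DD}$, which decouples componentwise, the problem reduces to the Bakry-Emery inequality for a single heat equation with logarithmic entropy: a double integration by parts produces $2\rho^2\sum_i\int_\Omega u_i|\na^2\log u_i|^2\,dx$, which is also immediate from Theorem~\ref{thm.de} applied with $a(u)=\rho$ and $h(u)=u\log u-u$.

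Strictness of $I_0^k>0$ will then be extracted from the hypothesis that $u^k$ is not the steady state: $I_{DD}=0$ forces each $\na^2\log u_i$ to vanish and hence, with the Neumann condition, each $u_i$ to be constant; after that $I_{RR}=0$ forces $u_1\equiv u_2$, contradicting the hypothesis. The main obstacle I anticipate is the careful treatment of boundary contributions in the repeated integration by parts underlying $I_{DD}$: in $d\ge 2$ this needs the convexity of $\Omega$ exactly as in \cite[Prop.~4.2]{LiMi13}, and one also needs $\na A[u^k]\cdot\nu=0$ on $\pa\Omega$ for the second integration by parts, which must be justified from the smoothness of the scheme solutions and the structure of $A$.
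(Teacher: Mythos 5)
Your proposal is correct and follows essentially the same route as the paper's proof: after noting $DA[u](A[u])=A[A[u]]$ by linearity, the paper likewise splits $G''(0)=-I_2-I_1-I_0$ into pure-diffusion, mixed, and pure-reaction integrals (your $I_{DD}$, $I_{DR}$, $I_{RR}$), obtains for the mixed part exactly your perfect square $2\rho\mu\int_\Omega(u_1+u_2)|\na\log u_1-\na\log u_2|^2dx$ (this is precisely where $\rho_1=\rho_2$ and $C_{\rm RK}=1$ enter; the paper phrases it as the vanishing of the discriminant $4a_1a_3-a_2^2$ of a quadratic form in $\na\log u_1,\na\log u_2$), and observes the same pointwise-nonnegative integrand for the reaction part. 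The one genuine difference is the pure-diffusion term: you invoke the exact Bakry--Emery identity $I_{DD}=2\rho^2\sum_i\int_\Omega u_i|\na^2\log u_i|^2dx$, which brings in the Hessian and therefore needs convexity of $\Omega$ for the sign of the boundary terms in $d\ge2$, whereas the paper adds a multiple of the identity $\int_\Omega\diver(u_i^{-2}|\na u_i|^2\na u_i)dx=0$ and checks the discriminant of the resulting quadratic form in $(\Delta u_i,|\na u_i|^2/u_i)$, yielding the Hessian-free lower bound $\eps\sum_i\int_\Omega u_i^{-3}|\na u_i|^4dx$; both suffice, and your strictness argument (constancy from $I_{DD}=0$, then $u_1\equiv u_2$ from $I_{RR}=0$) is in fact slightly more complete than the paper's. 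One caveat: your side remark that nonnegativity of $I_{DD}$ is ``also immediate from Theorem~\ref{thm.de} with $a(u)=\rho$ and $h(u)=u\log u-u$'' does not hold as stated, since then $\mu(u)=\rho u$ and condition \eqref{de.cond3} becomes $(C_{\rm RK}+2)\mu\mu''+(C_{\rm RK}-1)(\mu')^2=0$ for $C_{\rm RK}=1$, violating the required strict inequality; your direct double integration by parts is the argument to keep.
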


Note that we need equal diffusivities $\rho_1=\rho_2$ and higher-order
schemes ($C_{\rm RK}=1$). These conditions are in accordance of \cite{LiMi13},
where the continuous equation was studied.
In order to highlight the step where these conditions are needed, the
following proof is slightly more general than actually needed.

\begin{proof}
We fix $k\in\N$ and set $u:=u^k$. Let $A[u]=(A_1[u],A_2[u])
=(\rho_1\Delta u_1+\mu(u_2-u_1),\rho_2\Delta u_2+\mu(u_1-u_2))$.
Since $A$ is linear, $DA[u](h)=A[h]$. Thus,
$$
  G''(0) = -\int_\Omega\big(C_{\rm RK}
	h'(u)^\top A[A[u]] + A[u]^\top h''(u)A[u]\big)dx 
  = -G_{1} - G_{2}.
$$
In the following, we set $\pa_i h=\pa h/\pa u_i$ for $i=1,2$.
We integrate by parts twice, using the boundary conditions
$\na u_i\cdot\nu=0$ and $\na A_i[u]\cdot\nu=0$ on $\pa\Omega$, and collect
the terms:
\begin{align*}
  G_{1} &= C_{\rm RK}\int_\Omega\Big(
	\pa_1 h(u)\big(\rho_1\Delta A_1[u] + \mu(A_2[u]-A_1[u])\big) \\
	&\phantom{xx}{}
	+ \pa_2 h(u)\big(\rho_2\Delta A_2[u] + \mu(A_1[u]-A_2[u])\big)\Big)dx \\
	&= C_{\rm RK}\int_\Omega\Big(\rho_1\Delta\pa_1 h(u)A_1[u]
	+ \rho_2\Delta\pa_2 h(u)A_2[u] \\
	&\phantom{xx}{}+ \mu(\pa_1 h(u)-\pa_2 h(u))(A_2[u]-A_1[u])\Big)dx \\
	&= C_{\rm RK}\int_\Omega\Big(\rho_1\big(\pa_1^2 h(u)\Delta u_1
	+\pa_1^3 h(u)|\na u_1|^2\big)\big(\rho_1 \Delta u_1+\mu(u_2-u_1)\big) \\
	&\phantom{xx}{}+ \rho_2\big(\pa_2^2 h(u)\Delta u_2
	+ \pa_2^3 h(u)|\na u_2|^2\big)\big(\rho_2 \Delta u_2+\mu(u_1-u_2)\big) \\
	&\phantom{xx}{}+ \mu(\pa_2 h(u)-\pa_1 h(u))\big(\rho_1\Delta u_1-\rho_2\Delta u_2
	+ 2\mu(u_2-u_1)\big)\Big)dx \\
	&= C_{\rm RK}\int_\Omega\Big(\rho_1^2\pa_1^2h(u)(\Delta u_1)^2
	+ \rho_2^2\pa_2^2h(u)(\Delta u_2)^2 
	+ \rho_1^2\pa_1^3h(u)\Delta u_1|\na u_1|^2 \\
	&\phantom{xx}{}+ \rho_2^2\pa_2^3h(u)\Delta u_2|\na u_2|^2 
	+ \rho_1 \mu\big(\pa_1^2h(u)(u_2-u_1) + \pa_2h(u)-\pa_1h(u)\big)\Delta u_1 \\
	&\phantom{xx}{}
	+ \rho_2 \mu\big(\pa_2^2h(u)(u_1-u_2) + \pa_1h(u)-\pa_2h(u)\big)\Delta u_2 
	+ \rho_1 \mu \pa_1^3h(u)(u_2-u_1)|\na u_1|^2 \\
	&\phantom{xx}{}+ \rho_2 \mu \pa_2^3h(u)(u_1-u_2)|\na u_2|^2
	+ 2\mu^2(\pa_2h(u)-\pa_1h(u))(u_2-u_1)\Big)dx.
\end{align*}
Furthermore,
\begin{align*}
  G_{2} &= \int_\Omega\Big(\pa_1^2h(u)\big(\rho_1\Delta u_1+\mu(u_2-u_1)\big)^2
	+ \pa_2^2h(u)\big(\rho_2\Delta u_2+\mu(u_1-u_2)\big)^2\Big)dx \\
	&= \int_\Omega\Big(\rho_1^2\pa_1^2h(u)(\Delta u_1)^2
	+ \rho_2^2\pa_2^2h(u)(\Delta u_2)^2 + 2\rho_1 \mu\pa_1^2h(u)(u_2-u_1)\Delta u_1 \\
	&\phantom{xx}{}+ 2\rho_2 \mu\pa_2^2h(u)(u_1-u_2)\Delta u_2
	+ \mu^2(\pa_1^2h(u)+\pa_2^2h(u))(u_1-u_2)^2\Big)dx.
\end{align*}
Adding $G_1$ and $G_2$, we arrive at
\begin{align*}
  G''(0) &= -\sum_{i=1}^2\int_\Omega\Big(\rho_i^2(C_{\rm RK}+1)\pa_i^2h(u)
	(\Delta u_i)^2 + \rho_i^2C_{\rm RK}\pa_i^3h(u)\Delta u_i|\na u_i|^2\Big)dx \\
	&\phantom{xx}{}- \int_\Omega\Big(\rho_1\mu\big((C_{\rm RK}+2)\pa_1^2h(u)(u_2-u_1)
	+ C_{\rm RK}(\pa_2h(u)-\pa_1h(u))\big)\Delta u_1 \\
	&\phantom{xx}{}+ \rho_2\mu\big((C_{\rm RK}+2)\pa_2^2h(u)(u_1-u_2)
	+ C_{\rm RK}(\pa_1h(u)-\pa_2h(u))\big)\Delta u_2 \\
	&\phantom{xx}{} + \rho_1 \mu C_{\rm RK}\pa_1^3h(u)(u_2-u_1)|\na u_1|^2
	+ \rho_2 \mu C_{\rm RK}\pa_2^3h(u)(u_1-u_2)|\na u_2|^2\Big)dx \\
	&\phantom{xx}{}
	- \int_\Omega \mu^2\Big(2(\pa_1h(u)-\pa_2h(u))
	+(\pa_1^2h(u)+\pa_2^2h(u))(u_1-u_2)\Big)(u_1-u_2)dx \\
	&= -I_2 - I_1 - I_0.
\end{align*}

The idea of \cite{LiMi13}
is to show that each integral $I_i$, involving only derivatives of order $i$, 
is nonnegative. In contrast to \cite{LiMi13}, we employ systematic
integration by parts, which allows for a simpler and more general proof in our context.
For the term $I_2$, we use the following integration-by-parts formula:
$$
  0 = \int_\Omega\diver\big(u_i^{-2}|\na u_i|^3\big)dx
	= \int_\Omega\big(-2u_i^{-3}|\na u_i|^4 + 3u_i^{-2}\Delta u_i|\na u_i|^2\big)dx
	=: J_i.
$$
Then, for $\eps>0$,
\begin{align*}
  &I_2 - c\sum_{i=1}^2\rho_i^2 J_i - \eps\sum_{i=1}^2 u_i^{-3}|\na u_i|^4dx \\
	&= \sum_{i=1}^2\rho^2_i\int_\Omega\Big((C_{\rm RK}+1)u_i^{-1}(\Delta u_i)^2
	- (3c + C_{\rm RK})u_i^{-2}\Delta u_i|\na u_i|^2 
	+ (2c-\eps) u_i^{-3}|\na u_i|^4\Big)dx.
\end{align*}
The integrand defines a quadratic polynomial in the variables $\Delta u_i$ 
and $|\na u_i|^2$ and is nonnegative if its discriminant satisfies
$4(2c-\eps)(C_{\rm RK}+1)-(3c+C_{\rm RK})^2\ge 0$. It turns out that this inequality
holds true for $C_{\rm RK}\in\{0,1\}$ if we choose $c=2/3$ and $\eps>0$ sufficiently
small. When $C_{\rm RK}=2$, we can show only that $I_2\ge 0$ which is not
sufficient to prove that $G''(0)<0$ (see below). We conclude that
\begin{equation}\label{sys.I2}
  I_2\ge \eps\sum_{i=1}^2\int_\Omega u_i^{-3}|\na u_i|^4 dx.
\end{equation}

Integrating by parts in $I_1$ in order to obtain only first-order derivatives, 
we find after some rearrangements that
\begin{align*}
  I_1 &= \mu\int_\Omega\big(a_1|\na\log u_1|^2 + a_2\na\log u_1\cdot\na\log u_2
	+ a_3|\na\log u_2|^2\big)dx, \quad\mbox{where} \\
	a_1 &= 2\rho_1(C_{\rm RK} u_1+u_2), \quad
	a_3 = 2\rho_2(C_{\rm RK}u_2+u_1), \\
	a_2 &= -(C_{\rm RK}(\rho_1+\rho_2)+2\rho_2)u_1
	- (C_{\rm RK}(\rho_1+\rho_2)+2\rho_1)u_2.
\end{align*}
The integrand is nonnegative if and only if $4a_1a_3 - a_2^2\ge 0$
for all $(u_1,u_2)$. We compute:
\begin{align*}
  C_{\rm RK}=0: &\quad 4a_1a_3-a_2^2 = -4(\rho_1u_2-\rho_2u_1)^2, \\
	C_{\rm RK}=1: &\quad 4a_1a_3-a_2^2 = (\rho_1-\rho_2)\big(\rho_1(u_1^2+6u_1u_2+9u_2^2)
	- \rho_2(9u_1^2+6u_1u_2+u_2^2)\big), \\
	C_{\rm RK}=2: &\quad 4a_1a_3-a_2^2 = -4\big(\rho_1(u_1+2u_2) - \rho_2(2u_1+u_2)\big).
\end{align*}
Thus, $4a_1a_3 - a_2^2\ge 0$ is possible only if $\rho_1=\rho_2$ and
$C_{\rm RK}=1$. 

Finally, we see immediately that the remaining term
$$
  I_0 = \mu^2\int_\Omega\bigg(2(\log u_1-\log u_2)(u_1-u_2)
	+ \bigg(\frac{1}{u_1}+\frac{1}{u_2}\bigg)(u_1-u_2)^2\bigg)dx
$$
is nonnegative. This shows that $G''(0)\le 0$.
If $G''(0)=0$, we infer from \eqref{sys.I2} that $u_i=\mbox{const.}$, but this
contradicts our hypothesis that $u_i$ is not a steady state.
\end{proof}


\section{The Derrida-Lebowith-Speer-Spohn equation}\label{sec.dlss}

Consider the one-dimensional fourth-order equation
\begin{equation}\label{dlss.eq}
  \pa_t u = -(u(\log u)_{xx})_{xx} \quad\mbox{in }\Omega, \ t>o, \quad
	u(0)=u^0
\end{equation}
with periodic boundary conditions. This equation appears as a scaling limit of
the so-called (time-discrete) Toom model, which describes interface fluctuations
in a two-dimensional spin system \cite{DLSS91}. The variable $u$ is the limit of
a random variable related to the deviation of the spin interface from a straight line.
The multi-dimensional version of \eqref{dlss.eq} models the eectron density $u$
in a quantum semiconductor, und the equation is the zero-temperature,
zero-field approximation of the quantum drift-diffusion model \cite{Jue09}.
For existence results for \eqref{dlss.eq}, we refer to \cite{JuMa08} and
references therein.

To simplify our calculations, we analyze
only the logarithmic entropy $H[u]=\int_\Omega u(\log u-1)dx$. It is
possible to verify condition \eqref{1.I0} also for entropies of the form
$\int_\Omega u^\alpha dx$, but it turns out that only sufficiently small
$\alpha>0$ are admissible (about $0<\alpha<0.15\ldots$) 
and the computations are very tedious.
Therefore, we restrict ourselves to the case $\alpha=0$.

\begin{theorem}
Let $(u^k)$ be a sequence of (smooth) solutions to the Runge-Kutta scheme
\eqref{1.rk} with $C_{\rm RK}=1$ for \eqref{dlss.eq}.
Let the entropy be given by $H[u]=\int_\Omega u(\log u-1)dx$, 
let $k\in\N$ be fixed, and let $u^k$ be not a steady state. 
Then there exists $\tau^k>0$ such that for all $0<\tau<\tau^k$,
$$
  H[u^k] + \tau q\int_\Omega u(\log u)_x^8 dx 
	+ \tau\int_\Omega u(\log u)_{xx}^2 dx \le H[u^{k-1}], \quad 
	q\approx 0.0045.
$$
\end{theorem}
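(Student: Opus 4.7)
The plan is to apply Theorem~\ref{thm.ed} with $h(u)=u(\log u-1)$, so that $h'(u)=\log u$ and $h''(u)=u^{-1}$, and with the fourth-order operator $A[u]=(u(\log u)_{xx})_{xx}$. Since the Runge--Kutta scheme has order $p\ge 2$, $C_{\rm RK}=1$, and the task reduces to verifying $I_0^k>0$ and computing $G'(0)$. Writing $u:=u^k$ and using the periodic boundary conditions, two integrations by parts give
\[
G'(0)\;=\;-\!\int_\Omega(\log u)\,(u(\log u)_{xx})_{xx}\,dx\;=\;-\!\int_\Omega u\,(\log u)_{xx}^2\,dx,
\]
which is the classical DLSS entropy production and supplies the term $\tau\int_\Omega u(\log u)_{xx}^2 dx$ in the claimed dissipation inequality.

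The main task is then to verify $I_0^k=-G''(0)>0$ with a quantitative lower bound. I would compute the Fr\'echet derivative $DA[u](A[u])$, substitute it into the integrand of $I_0^k$, and integrate by parts to reduce all derivatives of order $\ge 5$ to lower order. After regrouping in the logarithmic variables $\xi_j:=\partial_x^j(\log u)$, this rewrites as
\[
-G''(0)\;=\;\int_\Omega u\,P(\xi_1,\xi_2,\xi_3)\,dx
\]
for an explicit polynomial $P$ of weighted degree eight (with $\xi_j$ carrying weight $j$). Following the systematic integration-by-parts method of \cite{JuMa06,JuMa08}, I would then adjoin to $-G''(0)$ a linear combination $\sum_\ell c_\ell J_\ell$ of vanishing identities
\[
J_\ell\;=\;\int_\Omega\partial_x\bigl(u\,\xi_1^{a_\ell}\xi_2^{b_\ell}\bigr)\,dx\;=\;0,
\]
with undetermined coefficients $c_\ell$, producing a new polynomial $Q=P+\sum_\ell c_\ell p_\ell$. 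The goal is to choose the $c_\ell$ so that the pointwise bound $Q(\xi)\ge 2q\,\xi_1^8$ holds with $q>0$ as large as possible.

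The main obstacle is precisely this polynomial decision problem: with the substitutions $x=\xi_2/\xi_1^2$ and $y=\xi_3/\xi_1^3$ it reduces to a bivariate positivity condition whose coefficients depend linearly on the $c_\ell$, and the resulting discriminant inequality has to be optimised over the $c_\ell$ with the aid of a computer-algebra system such as \texttt{Mathematica}; the numerical value $q\approx 0.0045$ quoted in the statement is the optimum thus obtained. Once $Q\ge 2q\,\xi_1^8$ is established, one integrates to get
\[
-G''(0)\;\ge\; 2q\int_\Omega u^k(\log u^k)_x^8\,dx,
\]
which is strictly positive because $u^k$ is assumed not to be the constant steady state, so that $(\log u^k)_x\not\equiv 0$. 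Theorem~\ref{thm.ed} then supplies $\tau^k>0$ such that $G''(\eta)\le 0$ for $\eta\in[0,\tau^k]$, and feeding the quantitative bound on $-G''$ into the Taylor remainder---and shrinking $\tau^k$ if necessary---produces the additional $\tau q\int_\Omega u^k(\log u^k)_x^8\,dx$ contribution, completing the proof.
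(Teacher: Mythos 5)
Your overall strategy --- apply Theorem~\ref{thm.ed} with $h(u)=u(\log u-1)$, compute $G'(0)=-\int_\Omega u(\log u)_{xx}^2\,dx$, and establish a quantitative bound $-G''(0)\ge q\int_\Omega u(\log u)_x^8\,dx$ by systematic integration by parts plus a computer-assisted polynomial positivity argument --- is exactly the paper's. But the combinatorial setup of the key step is mis-specified in a way that would prevent the computation from closing. After inserting $DA[u](A[u])$ and integrating by parts, the integrand is a weighted-degree-eight polynomial in $\xi_1,\dots,\xi_4$ with $\xi_j=\partial_x^j\log u$, \emph{not} in $\xi_1,\xi_2,\xi_3$ alone: both $u^{-1}A[u]^2$ and $h'(u)DA[u](A[u])$ contribute the term $2u\,\xi_4^2$ (together with $4\xi_1^2\xi_2\xi_4$, $8\xi_1\xi_3\xi_4$, $5\xi_2^2\xi_4$), and $\int_\Omega u\,\xi_4^2\,dx$ cannot be integrated by parts down to third order --- any such manipulation raises, not lowers, the derivative order. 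Correspondingly, your family of shift identities $\int_\Omega\partial_x(u\,\xi_1^{a}\xi_2^{b})\,dx=0$ is too small: the paper needs eight identities, four of which carry a factor $v_{xxx}$ (its $J_3,J_5,J_6,J_8$), precisely to generate the monomials that cancel the $\xi_3$- and $\xi_4$-cross terms. The actual reduction is: complete the square in $\xi_4$ (possible since the coefficient of $\xi_4^2$ is $a_{15}=2>0$), then choose the free coefficients so that \emph{all} residual $\xi_3$-terms vanish, which leaves a quartic in $\xi_2/\xi_1^2$ --- a univariate, not bivariate, positivity problem --- whose discriminant conditions fix $c_8^*=17/172$, determine $c_1$ as a function of $c_3$, and finally give $q=p(c_3^*)\approx 0.0045$ at $c_3^*=-0.029$. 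The bivariate problem in $(x,y)=(\xi_2/\xi_1^2,\xi_3/\xi_1^3)$ that you propose to hand to \texttt{Mathematica} is not the decision problem that actually arises, so the ``optimum'' you invoke is not defined by your setup.

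A secondary point, which you share with the paper's own write-up and which you should at least make explicit: the Taylor remainder is $\frac{\tau^2}{2}G''(\xi^k)$, so a bound $G''\le -2q\int_\Omega u(\log u)_x^8\,dx$ on $[0,\tau^k]$ yields an extra dissipation term of order $\tau^2$, not $\tau$; as written, ``feeding the quantitative bound into the Taylor remainder'' does not literally produce the term $\tau q\int_\Omega u(\log u)_x^8\,dx$ without an additional normalization of $q$ by $\tau/2$ (or a corresponding reformulation of the statement).
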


\begin{proof}
First, we observe that $G'(0)=-\int_\Omega (u(\log u)_{xx})_{xx}\log udx
=-\int_\Omega u(\log u)_{xx}^2 dx$. With $A[u]=(u(\log u)_{xx})_{xx}$ and
$DA[u](h) = \big(h_{xx} - 2(\log u)_xh_x + (\log u)_x^2 h\big)_{xx}$,
we can write $G''(0)=-I_0^k$ according to \eqref{1.I0} as
\begin{align*}
  G''(0) &= -\int_\Omega\Big(\log u\big(A[u]_{xx}
	- 2(\log u)_xA[u]_x + (\log u)_x^2A[u]\big)_{xx} + \frac{1}{u}A[u]^2\Big)dx \\
	&= -\int_\Omega\Big((\log u)_{xx}\big(A[u]_{xx}
	- 2(\log u)_xA[u]_x + (\log u)_x^2A[u]\big) + \frac{1}{u}A[u]^2\Big)dx \\
	&= -\int_\Omega\Big(\big(v_{xxxx} + 2(v_x v_{xx})_x 
	+ v_x^2 v_{xx}\big)
	A[u] + \frac{1}{u}A[u]^2\Big)dx,
\end{align*}
where we have integrated by parts several times and have set $v=\log u$.
Then $A[u]=u(v_x^2 v_{xx} + 2v_x v_{xxx} + v_{xx}^2
+ v_{xxxx})$ and, with the abbreviations $\xi_1=v_x,\ldots,\xi_4=v_{xxxx}$,
\begin{align*}
  G''(0) &= -\int_\Omega u\Big(2\xi_1^4\xi_2^2 
	+ 8\xi_1^3\xi_2\xi_3
	+ 5\xi_1^2\xi_2^3 + 4\xi_1^2\xi_2\xi_4
	+ 8\xi_1^2\xi_3^2 + 10\xi_1\xi_2^2\xi_3 \\
	&\phantom{xx}{}+ 8\xi_1\xi_3\xi_4 + 3\xi_2^4
	+ 5\xi_2^2\xi_4 + 2\xi_4^2\Big)dx.
\end{align*}
We employ the following integration-by-parts formulas:
\begin{align*}
  0 &= \int_\Omega(uv_x^7)_x dx = \int_\Omega u(\xi_1^8 + 7\xi_1^6\xi_2)dx =: J_1, \\
	0 &= \int_\Omega(uv_{xx}v_x^5)_x dx
	= \int_\Omega u(\xi_1^6\xi_2 + \xi_1^5\xi_3 + 5\xi_1^4\xi_2^2)dx =: J_2, \\
	0 &= \int_\Omega(uv_{xxx}v_x^4)_x dx
	= \int_\Omega u(\xi_1^5\xi_3 + \xi_1^4\xi_4 + 4\xi_1^3\xi_2\xi_3)dx =: J_3, \\
	0 &= \int_\Omega(uv_{xx}^2v_x^3)_x dx
	= \int_\Omega u(\xi_1^4\xi_2^2 + 2\xi_1^3\xi_2\xi_3 + 3\xi_1^2\xi_2^3)dx =: J_4, \\
	0 &= \int_\Omega(uv_{xx}v_{xxx}v_x^2)_x dx
	= \int_\Omega u(\xi_1^3\xi_2\xi_3 + \xi_1^2\xi_2\xi_4 + \xi_1^2\xi_3^2
	+ 2\xi_1\xi_2^2\xi_3)dx =: J_5, \\
	0 &= \int_\Omega(uv_{xxx}^2v_x)_x dx
	= \int_\Omega u(\xi_1^2\xi_3^2 + 2\xi_1\xi_3\xi_4 + \xi_2\xi_3^2)dx =: J_6, \\
	0 &= \int_\Omega(uv_{xx}^3v_x)_x dx
	= \int_\Omega u(\xi_1^2\xi_2^3 + 3\xi_1\xi_2^2\xi_3 + \xi_2^4)dx =: J_7, \\
	0 &= \int_\Omega(uv_{xxx}v_{xx}^2)_x dx
	= \int_\Omega u(\xi_1\xi_2^2\xi_3 + 2\xi_2\xi_3^2 + \xi_2^2\xi_4)dx =: J_8.
\end{align*}
Then
\begin{align*}
  G''(0) &= 
	G''(0) - 4\sum_{i=1}^8 c_iJ_i = -\int_\Omega u\Big(a_1\xi_1^8 + a_2\xi_1^6\xi_2 
	+ a_3\xi_1^5\xi_3 + a_4\xi_1^4\xi_2^2
	+ a_5\xi_1^4\xi_4	 \\
	&\phantom{xx}{}	+ a_6\xi_1^3\xi_2\xi_3
	+ a_7\xi_1^2\xi_2^3 + a_8\xi_1^2\xi_2\xi_4 + a_9\xi_1^2\xi_3^2
	+ a_{10}\xi_1\xi_2^2\xi_3 + a_{11}\xi_1\xi_3\xi_4 + a_{12}\xi_2^4 \\
	&\phantom{xx}{}
	+ a_{13}\xi_2^2\xi_4 + a_{14}\xi_2\xi_3^2 + a_{15}\xi_4^2\Big)dx,
\end{align*}
where
\begin{align*}
  a_1 &= 4c_1, & a_2 &= 28c_1 + 4c_2, &	a_3 &= 4c_2 + 4c_3, \\
	a_4 &= 2 + 20c_2 + 4c_4, & a_5 &= 4c_3, & a_6 &= 8 + 16c_3 + 8c_4 + 4c_5, \\
	a_7 &= 5 + 12c_4 + 4c_7, & a_8 &= 4 + 4c_5, & a_9 &= 8 + 4c_5 + 4c_6, \\ 
	a_{10} &= 10 + 8c_5 + 12c_7 + 4c_8, &	a_{11} &= 8 + 8c_6, & a_{12} &= 3 + 4c_7, \\
	a_{13} &= 5 + 4c_8, & a_{14} &= 4c_6 + 8c_8, & a_{15} &= 2. & &  
\end{align*}
Next, we eliminate all terms involving $\xi_4$ by formulating the following square:
\begin{align*}
  & G''(0) = -\int_\Omega u\bigg[a_{15}
	\bigg(\xi_4 + \frac{a_5}{2a_{15}}\xi_1^4 + \frac{a_8}{2a_{15}}\xi_1^2\xi_2
	+ \frac{a_{11}}{2a_{15}}\xi_1\xi_3 + \frac{a_{13}}{2a_{15}}\xi_2^2\bigg)^2 \\
	&\phantom{xx}{}+ \bigg(a_1-\frac{a_5^2}{4a_{15}}\bigg)\xi_1^8
	+ \bigg(a_2-\frac{a_5a_8}{2a_{15}}\bigg)\xi_1^6\xi_2
	+ \bigg(a_3-\frac{a_5a_{11}}{2a_{15}}\bigg)\xi_1^5\xi_3 \\
	&\phantom{xx}{}+ \bigg(a_4-\frac{a_8^2}{4a_{15}}-\frac{a_5a_{13}}{2a_{15}}\bigg)
	\xi_1^4\xi_2^2 + \bigg(a_6-\frac{a_8a_{11}}{2a_{15}}\bigg)\xi_1^3\xi_2\xi_3
	+ \bigg(a_7-\frac{a_8a_{13}}{2a_{15}}\bigg)\xi_1^2\xi_2^3 \\
	&\phantom{xx}{}
	+ \bigg(a_9-\frac{a_{11}^2}{4a_{15}}\bigg)\xi_1^2\xi_3^2 
	+ \bigg(a_{10}-\frac{a_{11}a_{13}}{2a_{15}}\bigg)\xi_1\xi_2^2\xi_3
	+ \bigg(a_{12}-\frac{a_{13}^2}{4a_{15}}\bigg)\xi_2^4
	+ a_{14}\xi_2\xi_3^2\bigg]dx.
\end{align*}
We eliminate all terms involving $\xi_3$ and set the corresponding 
coefficients to zero. From $a_{14}=0$ we conclude that $c_6=-2c_8$. Furthermore,
\begin{align*}
  a_9-\frac{a_{11}^2}{4a_{15}} &= 0\ &\mbox{gives}\qquad
	c_5 &= 8c_8^2-6c_8, \\
	a_{10}-\frac{a_{11}a_{13}}{2a_{15}} &= 0\ &\mbox{gives}\qquad
	c_7 &= -\frac{20}{3}c_8^2 + \frac83 c_8, \\
	a_6-\frac{a_8a_{11}}{2a_{15}} &= 0\ &\mbox{gives}\qquad
	c_4 &= -2c_3 - 16c_8^3 + 16c_8^2 - 5c_8, \\
	a_3-\frac{a_5a_{11}}{2a_{15}} &= 0\ &\mbox{gives}\qquad
	c_2 &= c_3 - 4c_3c_8.
\end{align*}
By these choices, we obtain
$$
  b_{12} :=a_{12}-\frac{a_{11}^2}{4a_{15}} 
	= -\frac{86}{3}c_8^2 + \frac{17}{3}c_8 - \frac18.
$$
This quadratic polynomial in $c_8$ admits its maximal value at $c_8^*=17/172$
with value $b_{12}=20/129$.
The integral can now be written as
$$
  G''(0) \le -\int_\Omega u\big(b_1\xi_1^8 + b_2\xi_1^6\xi_2
	+ b_4\xi_1^4\xi_2^2 + b_7\xi_1^2\xi_2^3 + b_{12}\xi_2^4\big)dx, 
$$
where
\begin{align*}
  & b_1 = a_1 - \frac{a_5^2}{4a_{15}}
	= 4c_1 - 2c_3^2, \\
	& b_2 = a_2 - \frac{a_5a_8}{2a_{15}}
	= 28c_1 - 32c_3c_8^2 + 8c_3c_8, \\
	& b_4 = a_4 - \frac{a_8^2}{4a_{15}} - \frac{a_5a_{13}}{2a_{15}}
	= 7c_3 - 84c_3c_8 - 128c_8^4 + 128c_8^3 - 40c_8^2 + 4c_8, \\
	& b_7 = a_7 - \frac{a_8a_{13}}{2a_{15}}
	= -24c_3 - 244c_8^3 + \frac{448}{3}c_8^2 - \frac{70}{3}c_8.
\end{align*}
If $b_4=2b_2b_{12}/b_7 + b_7^2/(4b_{12})$, we can write the integal
as the sum of two squares, noting that $b_{12}$ is positive,
$$
  G''(0) \le -\int_\Omega u\bigg(b_{12}
	\bigg(\xi_2^2 + \frac{b_7}{2b_{12}}\xi_1^2\xi_2 + \frac{b_2}{b_7}\xi_1^4\bigg)^2
	+ \bigg(b_1 - \frac{b_2^2b_{12}}{b_7^2}\bigg)\xi_1^8\bigg)dx.
$$
The expression $b_4b_7-2b_2b_{12} - b_7^3/(4b_{12})=0$ defines a polynomial 
in $(c_1,c_3)$ which is linear in $c_1$. Solving it for $c_1$ gives
$$
  c_1 = \frac{449307}{175}c_3^3 + \frac{741681}{2150}c_3^2
	+ \frac{35780649411}{2393160700}c_3 + \frac{34135130165539}{163091166664200}.
$$
It remains to show that $p(c_3):=b_1 - b_2^2b_{12}/b_7^2$,
which is a polynomial of fourth order in $c_3$, is positive. 
Choosing $c_3^*=-0.029$, we find that $p(c_3^*)\approx 0.0045>0$. This shows that
$$
  G''(0) \le -q(c_3^*)\int_\Omega u\xi_1^8 dx
	= -q(c_3^*)\int_\Omega u(\log u)_x^8 dx \le 0.
$$
Finally, if $G''(0)=0$, we infer that $u$ is constant which is excluded.
Therefore, $G''(0)<0$, which ends the proof.
\end{proof}


\section{Numerical examples}\label{sec.num}

The aim of this section is to explore the numerical behavior of the 
second-order derivative of the function $G(\tau)$, defined in the introduction,
for the porous-medium equation
\eqref{pm.eq} in one space dimension. The equation is discretized by standard
finite differences, and we employ periodic boundary conditions. The discrete
solution $u_i^k$ approximates the solution $u(x_i,t^k)$ to \eqref{pm.eq}
with $x_i=i\triangle x$, $t^k=k\tau$, and $\triangle x$, $\tau$ are the
space and time step sizes, respectively. We choose the Barenblatt profile
\begin{equation}\label{num.ic}
  u^0(x) = t_0^{-1/(\beta+1)}\max\bigg(0,C-\frac{\beta-1}{2\beta(\beta+1)}
	\frac{(x-1/2)^2}{t_0^{2/(\beta+1)}}\bigg)^{1/(\beta-1)}, \quad 0\le x\le 1,
\end{equation}
where 
$$
  t_0=0.01, \quad 
	C = \frac{\beta-1}{2\beta(\beta+1)}\frac{(x_R-1/2)^2}{t_0^{2/(\beta+1)}}, \quad
	x_R = \frac14,
$$
as the initial datum. Its support is contained in $[\frac12-x_R,\frac12+x_R]$;
see Figure \ref{fig.evol} (left). We choose the exponent $\beta=2$.
The semi-logarithmic plot of the discrete
entropy $H_d[u^k]=\sum_{i=0}^N (u_i^k)^\alpha \triangle x$ with $\alpha=5$
versus time is illustrated in
Figure \ref{fig.evol} (right), using the implicit Euler scheme with
parameters $\tau=10^{-4}$ and the number of grid points $N=1/\triangle x=64$. 
The decay is exponential for ``large'' times.
The nonlinear discrete system is solved by Newton's method with the tolerance
${\tt tol}=10^{-15}$. We have highlighted four time steps $t_i$ at which
we will compute numerically the function $G(\tau)$ 
for the following Runge-Kutta schemes:
\begin{align*}
  &\mbox{explicit Euler scheme:} & u^k-u^{k-1} &= -\tau A[u^{k-1}], \\
	&\mbox{implicit Euler scheme:} & u^k-u^{k-1} &= -\tau A[u^{k}], \\
	&\mbox{second-order trapezoidal rule:} & u^k-u^{k-1} 
	&= -\frac{\tau}{2}(A[u^k]+A[u^{k-1}]), \\
	&\mbox{third-order Simpson rule:} & u^k-u^{k-1} 
	&= -\frac{\tau}{6}(A[u^k]+4A[(u^k+u^{k-1})/2]+A[u^{k-1}]).
\end{align*}

\begin{figure}[ht]
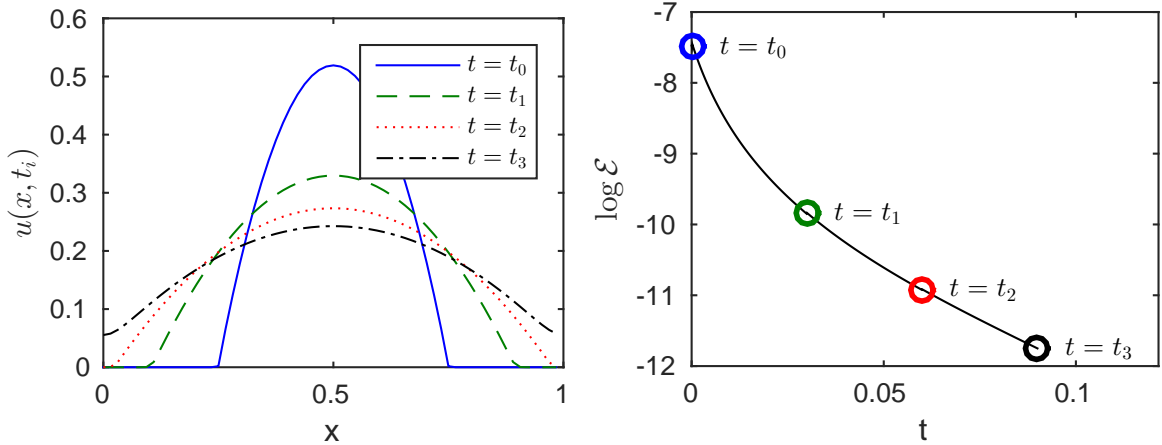

\includegraphics[width=75mm]{GSS_usnap_PME_3.eps}\hskip3mm
\includegraphics[width=75mm]{GSS_entropy_PME_3.eps}
\caption{Left: Evolution of the intial datum \eqref{num.ic} for $\beta=2$
at various time steps $t_i$, $i=0,1,2,3$. Right: Semi-logarithmic plot of
the discrete entropy $H_d[u^k]$ versus time.}
\label{fig.evol}
\end{figure}

We set as before $u:=u^k$, $v(\tau):=u^{k-1}$ and compute
$G(\tau)=H_d[u]-H_d[v(\tau)]$ and the discrete second-order derivative $\pa^2 G$
of $G$ (using central differences). The result is presented in Figure \ref{fig.G}.
As expected, the discrete derivative $\pa^2G$ is negative on a (small) interval
for all times $t_i$, $i=1,2,3$. 
We observe that $\pa^2 G$ is even slightly decreasing, but we expect that
it becomes positive for sufficiently large values of $\tau$.
Clearly, the values for $\pa^2G$ tend to zero as we approach the
steady state (see Remark \ref{rem.tauk}). This experiment indicates that
$\tau^k$ from Theorem \ref{thm.ed} is bounded from below by 
$\tau^*=3\cdot 10^{-4}$, for instance. 

\begin{figure}[ht]
\includegraphics[width=140mm]{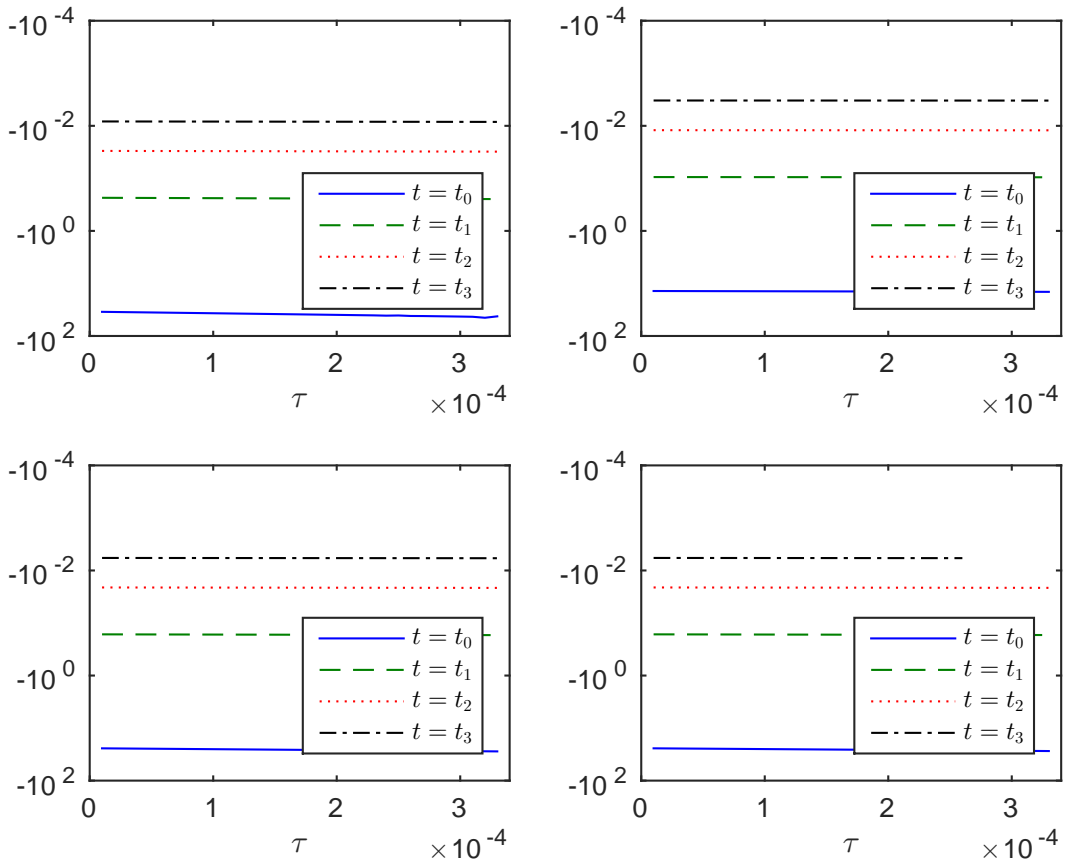}
\caption{Numerical evaluation of the discrete version of $G''(\tau)$
for various Runge-Kutta schemes at the time steps $t_i$. Top left: explicit
Euler scheme; top right: implicit Euler scheme; bottom left: implicit
trapezoidal rule; bottom right: Simpson rule.}
\label{fig.G}
\end{figure}

In order to understand the behavior of $G(\tau)$ in a better way, it is convenient
to study the discrete version of the quotient
\begin{equation}\label{num.Q}
  Q(\tau) := \frac{G''(\tau)}{\|u^{\alpha+2\beta-2}u_x^4\|_{L^1}}.
\end{equation}
Indeed, the analysis in Section \ref{sec.pme} gives an estimate of the type
$G''(0)\le -C\int_\Omega u^{2\beta+\alpha-5}u_x^4 dx$ for some constant $C>0$.
Thus, we expect that for sufficiently small $\tau>0$,
$Q(\tau)$ is bounded from above by some negative constant. 
This expectation is confirmed in Figure \ref{fig.quot}. In the examples, $Q(\tau)$
is a decreasing function of $\tau$, and $Q(0)$ is decreasing with increasing time.

\begin{figure}[ht]
\includegraphics[width=140mm]{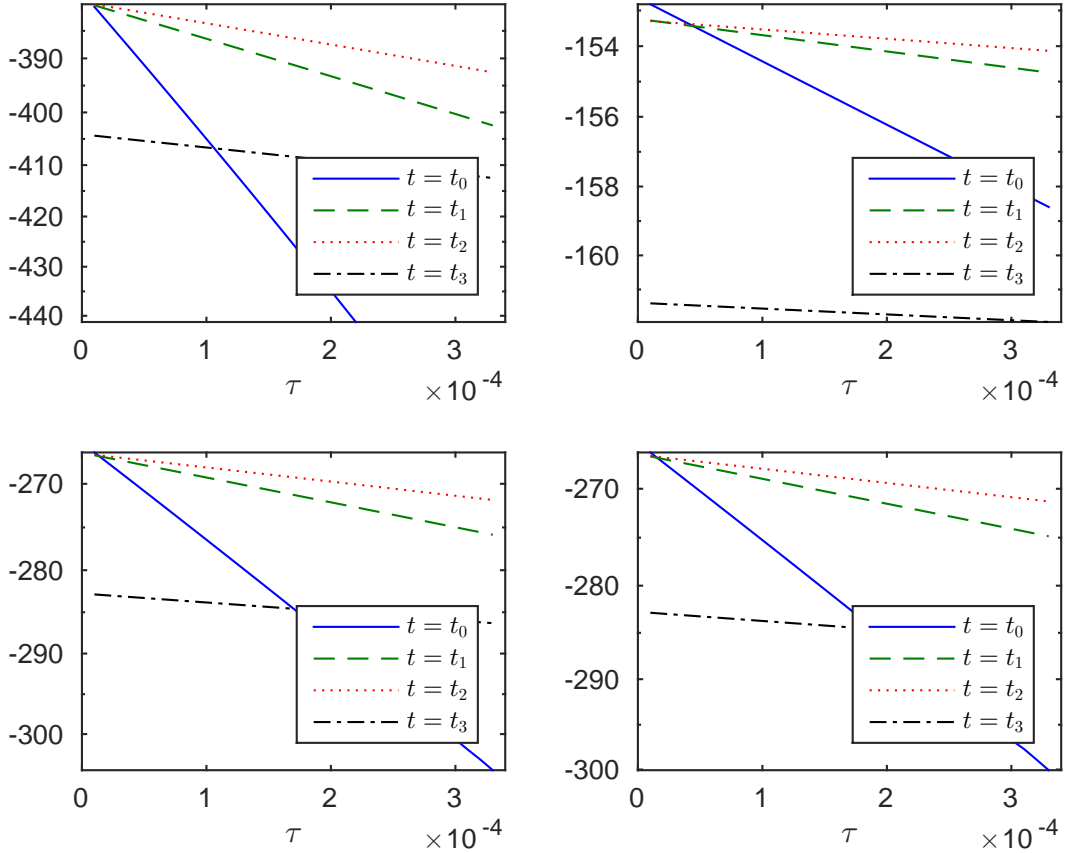}
\caption{Numerical evaluation of the discrete version of $Q(\tau)$,
defined in \eqref{num.Q}, 
for various Runge-Kutta schemes at the time steps $t_i$. Top left: explicit
Euler scheme; top right: implicit Euler scheme; bottom left: implicit
trapezoidal rule; bottom right: Simpson rule.}
\label{fig.quot}
\end{figure}

All these results indicate that
the threshold parameter $\tau^k$ in Theorem \ref{thm.ed} can be chosen
independently of the time step $k$.


\end{document}